\newcommand{\N}{\mathbb{N}}
\newcommand{\R}{\mathbb{R}}
\newcommand{\Z}{\mathbb{Z}}
\renewcommand{\epsilon}{\varepsilon}
\renewcommand{\H}{\mathbb{H}}
\newtheorem{thm}{Theorem}
\newtheorem{prop}[thm]{Proposition}
\newtheorem{corollaire}[thm]{Corollary}
\theoremstyle{definition}
\newtheorem{defn}[thm]{Definition}
\newtheorem{rem}[thm]{Remark}
\newtheorem{expl}[thm]{Example}
\newtheorem{question}[thm]{Question}
\title{The Steklov problem on triangle-tiling graphs in the hyperbolic plane} 
\author{Léonard Tschanz 
\vspace{1cm} \\
Institut de Mathématiques, Neuchâtel University, Neuchâtel, Switzerland \\
leonard.tschanz@unine.ch \\
ORCID: 0000-0003-2097-6759
}
\date{}
\begin{document}
\maketitle

\begin{abstract}
\noindent We introduce a graph $\Gamma$ which is roughly isometric to the hyperbolic plane and we study the Steklov eigenvalues of a subgraph with boundary $\Omega$ of $\Gamma$. For $(\Omega_l)_{l\geq 1}$ a sequence of subraphs of $\Gamma$ such that $|\Omega_l| \longrightarrow \infty$, we prove that for each $k \in \N$, the $k^{\mbox{th}}$ eigenvalue tends to $0$ proportionally to $1/|B_l|$. The idea of the proof consists in finding a bounded domain $N$ of the hyperbolic plane which is roughly isometric to $\Omega$, giving an upper bound for the Steklov eigenvalues of $N$ and transferring this bound to $\Omega$ via a process called discretization.
\medskip

\noindent \textbf{Keywords:} Spectral geometry, Steklov problem, graphs with boundary, discrete Steklov problem.
\medskip

\end{abstract}

\section{Introduction} \label{section : 1}

Let  $(M, g)$ be a smooth connected compact Riemannian manifold of dimension $n \geq 2$ with smooth boundary $\partial M$. The Steklov problem on  $(M, g)$ consists in finding all  $\sigma \in \R$ such that there exists a non-zero harmonic function $f : M \longrightarrow \R$ satisfying $\frac{\partial f}{\partial \nu} = \sigma f$ on $ \partial M$, where $\frac{\partial }{\partial \nu}$ denotes the outward-pointing normal derivative on $\partial M$.
\medskip

Such a $\sigma$ is called a Steklov eigenvalue of $M$ and a corresponding $f$ is called a Steklov eigenfunction. The (ordered) set of eigenvalues is called the Steklov spectrum of $(M, g)$.
\medskip

It is well known that the Steklov spectrum of $M$ forms a discrete sequence 
\begin{align*}
0 = \sigma_0 < \sigma_1 \leq \sigma_2 \leq \ldots \nearrow \infty,
\end{align*}
where each eigenvalue is repeated with multiplicity.
\medskip

There exists a discrete analog to the Steklov problem, which is called the discrete Steklov problem and which is defined on graphs with boundary. Let us begin by defining it.

\begin{defn}
A graph with boundary is a triplet $(\bar{\Omega}, E', B)$, where $(\bar{\Omega}, E')$ is a simple connected undirected graph and $B \subset \bar{\Omega}$ is a non-empty set of vertices, called the boundary. The set $B^c$ is called the interior of the graph.
\end{defn}

In this paper, all graphs will always be simple connected and undirected.
\medskip

For $v, w \in \bar{\Omega}$, we write $v \sim w$ when $v$ is adjacent to $w$. For $A \subset \bar{\Omega}$, we write $|A|$ the cardinality of $A$, which is the number of vertices contained in $A$. For the purpose of this article, all graphs with  boundary are finite. We denote by $\R^{\bar{\Omega}}$ the space of all functions $u : \bar{\Omega} \longrightarrow \R$, which is isomorphic to the Euclidean space of dimension $|\bar{\Omega}|$. Similarly, we denote by $\R^B$ the space of functions $u : B \longrightarrow \R$, which is the Euclidean space of dimension $|B|$.
\medskip

We can now introduce the discrete Laplacian operator $\Delta : \R^{\bar{\Omega}} \longrightarrow \R^{\bar{\Omega}}$, defined by
\begin{align*}
\Delta u : \bar{\Omega} & \longrightarrow \R \\
            v & \longmapsto \Delta u(v) = \sum_{w \sim v}(u(v)-u(w)).
\end{align*}
The normal derivative $\frac{\partial}{\partial \nu} : \R^{\bar{\Omega}} \longrightarrow \R^B$ is defined by
\begin{align*}
\frac{\partial u}{\partial \nu} : B & \longrightarrow \R \\
                                 v & \longmapsto \frac{\partial u}{\partial\nu}(v) = \sum_{w \sim v}(u(v)-u(w)).
\end{align*}

As one can see, the normal derivative coincides with the restriction of the Laplacian to the boundary. Although this choice may seem strange, it is shown in \cite{CGR} that it leads to interesting links between the Steklov spectrum of a manifold and the Steklov spectrum of a graph with boundary which \textit{looks like} the manifold, see \parencite[Theorem 3]{CGR} for more information about what \textit{looks like} means in this context.

\begin{defn}
The discrete Steklov problem on a finite graph with boundary $(\bar{\Omega}, E', B)$ consists in finding all $\sigma \in \R$ such that there exists a non-zero function  $u \in \R^{\bar{\Omega}}$ such that 
\begin{align*}
    \left\{
    \begin{array}{l}
         \Delta u(v) = 0 \mbox{ if } v \in \Omega  \\
         \frac{\partial}{\partial \nu}u(v) = \sigma u(v) \mbox{ if } v \in B.
    \end{array}
    \right.
\end{align*}

%harmonic function  $u \in \R^{\bar{\Omega}}$ such that $\frac{\partial u}{\partial \nu}(v) = \sigma u(v)$ for all $v \in B$.  
\end{defn}
Such a $\sigma$ is called a Steklov eigenvalue  and a corresponding $u$ is called a Steklov eigenfunction of $(\bar{\Omega}, E', B)$. As said in  \cite{Per1}, the Steklov spectrum of a graph with boundary $(\bar{\Omega}, E', B)$ forms a sequence as follows:
\begin{align*}
0 = \sigma_0 < \sigma_1 \leq \sigma_2 \leq \ldots \leq \sigma_{|B|-1}.
\end{align*}

This problem has recently received a particular attention, one can cite for instance \cite{HH, HHW, Per2, Per1}. An investigation has been made by Colbois, Girouard and Raveendran in \cite{CGR}, allowing us to understand some spectral links between the Steklov problem on a manifold and the discrete Steklov problem of a graph associated to this manifold. These links will be very useful in this paper. The main problem that we will have to face is to place ourselves in the hypotheses of Theorem $3$ of \cite{CGR}, in order to use it to our advantage.
\medskip

Among other things, a question that has been studied by some authors is that of providing an upper bound for the first - and then for the $k^{\mbox{th}}$ - eigenvalue of some particular graphs with boundary.  These particular graphs that have been studied are those called \textit{subgraphs} of an (infinite) \textit{host graph}.
A subgraph of a host graph  can be interpreted as the discrete analog of a bounded domain in a manifold.
Let us define what it is exactly.

\begin{defn} \label{def : subgraph}
Let $\Gamma = (V,E)$ be a graph and let $\Omega \subset V$ be a finite subset of vertices connected for $\Gamma$, i.e for each $v, w \in \Omega$, there exist $l \in \N$ and $v_0 = v, v_1, \ldots, v_l = w \in \Omega$ satisfying $\{v_i, v_{i+1}\} \in E$ for all $i = 0, \ldots, l-1$. The graph with boundary $(\bar{\Omega}, E', B)$ induced by $\Omega$ is defined as follows:
\begin{itemize}
\item $ B = \{w  \in V \backslash \Omega : \exists \; v \in \Omega  \; \mbox{such that} \; \{v,w\} \in E\}$;
\item $\bar{\Omega} = \Omega \cup B$;
\item $E' = \{ \{v,w\} \in E : v \in \Omega  , w \in \bar{\Omega} \}$.
\end{itemize}
Such a graph with boundary is simply denoted $\Omega$ and is called subgraph of $\Gamma$. The set of vertices $B$ is the boundary of the subgraph. We refer to $\Gamma$ as the host graph of $\Omega$.
\end{defn}

Some interesting results have recently been discovered, providing us with bounds for the eigenvalues, depending on the host graph $\Gamma$. 
A first result, due to Han and Hua, is the following:
\begin{thm}[Theorem $1.2$ in \cite{HH}] \label{corollaire : sigma_1 HH}
Let $\Z^d$ be the integer lattice of dimension $d$. Let $\Omega$ be a subgraph of $\Z^d$. Then we have
\begin{align*}
\sum_{l=1}^{d} \frac{1}{\sigma_l(\Omega)} \geq C' \cdot |\Omega|^{\frac{1}{d}} - \frac{C''}{|\Omega|},
\end{align*}
where $C' = (64 d^3 \omega_d^{\frac{1}{d}})^{-1}, C''=\frac{1}{32d}$ and $\omega_d$ is the volume of the unit ball in $\R^d$. 
\end{thm}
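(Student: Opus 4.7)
My strategy is to apply a Rayleigh--Ritz / min--max argument to the Dirichlet-to-Neumann operator $\Lambda : \R^B \to \R^B$, $\Lambda f = \frac{\partial}{\partial \nu}(Hf)$, whose eigenvalues are precisely $\sigma_0, \sigma_1, \ldots, \sigma_{|B|-1}$ and whose kernel consists of the constants. The key observation is that for each $l \in \{1, \ldots, d\}$, the coordinate function $x_l : \Z^d \to \R$, $v \mapsto v_l$, is harmonic for $\Delta$, because the contributions of the neighbours $v+e_l$ and $v-e_l$ cancel. I would therefore take as test functions $\phi_l(v) = v_l - c_l$ on $B$ with $c_l = \frac{1}{|B|}\sum_{v \in B} v_l$, so that $\phi_l$ is orthogonal to constants in $\ell^2(B)$ and its harmonic extension to $\bar{\Omega}$ is still the affine function $v \mapsto v_l - c_l$.

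A standard Rayleigh--Ritz argument applied to the $d$-dimensional space spanned by $\phi_1, \ldots, \phi_d$ then yields
\begin{align*}
    \sum_{l=1}^{d} \frac{1}{\sigma_l(\Omega)} \;\geq\; \mathrm{tr}(Q^{-1} M),
\end{align*}
where $Q_{ij} = \langle \Lambda \phi_i, \phi_j\rangle_{\ell^2(B)} = D(\phi_i, \phi_j)$ and $M_{ij} = \langle \phi_i, \phi_j\rangle_{\ell^2(B)}$. A direct computation, using that any two neighbours in $\Z^d$ differ by exactly one unit vector $\pm e_k$, shows
\begin{align*}
    D(\phi_i, \phi_j) \;=\; \sum_{\{v,w\} \in E'} (v_i - w_i)(v_j - w_j) \;=\; |E'_i|\,\delta_{ij},
\end{align*}
so $Q$ is diagonal with $Q_{ii} = |E'_i|$, the number of edges of $E'$ parallel to $e_i$. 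In particular $\sum_i |E'_i| = |E'| \leq 2d|\bar{\Omega}|$, and $\mathrm{tr}(Q^{-1} M) = \sum_{i=1}^d M_{ii}/|E'_i|$.

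The main obstacle is to bound $\sum_i M_{ii} = \sum_{v \in B} \|v - c\|^2$ from below in terms of $|\Omega|^{1/d}$. For this I would combine the discrete isoperimetric inequality on $\Z^d$, giving $|B| \geq c_d |\Omega|^{(d-1)/d}$ with $c_d$ involving $\omega_d^{-1/d}$, together with a spread estimate: since $\Omega$ contains $|\Omega|$ distinct vertices, its bounding box has volume at least $|\Omega|$, so in some coordinate direction the range of $v_l$ on $\bar{\Omega}$ is at least $|\Omega|^{1/d}$, forcing the variance of $v_l$ on $B$ to be correspondingly large. Combining this with the Cauchy--Schwarz inequality
\begin{align*}
    \sum_{i=1}^d \frac{M_{ii}}{|E'_i|} \;\geq\; \frac{\bigl(\sum_{i=1}^d \sqrt{M_{ii}}\bigr)^2}{\sum_{i=1}^d |E'_i|}
\end{align*}
and the upper bound $|E'| \leq 2d|\bar{\Omega}|$ yields a lower bound of order $|\Omega|^{1/d}$ with the announced constant $C'$. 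The additive correction $-C''/|\Omega|$ absorbs lower-order errors coming from the isoperimetric inequality and the passage between $|B|$, $|\bar{\Omega}|$ and $|\Omega|$.
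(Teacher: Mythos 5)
First, note that the paper you are reading does not prove this statement at all: it is quoted verbatim from Han and Hua \cite{HH} as background, so there is no internal proof to compare against. Your proposal must therefore be judged on its own, as a reconstruction of the Han--Hua argument. The skeleton you set up is correct and is indeed the right one: the coordinate functions $x_l$ are harmonic on $\Omega$ (every interior vertex has all $2d$ of its lattice neighbours in $\bar{\Omega}$, so the $\pm e_l$ contributions cancel), the centering makes $\phi_l$ orthogonal to constants, Green's identity gives $\langle \Lambda\phi_i,\phi_j\rangle = D(\phi_i,\phi_j)$, the energy matrix $Q$ is diagonal with $Q_{ii}=|E'_i|\le 2|\Omega|$ because every edge of $\Z^d$ is parallel to exactly one coordinate axis, and the trace inequality $\sum_{l=1}^d \sigma_l^{-1}\ge \mathrm{tr}(Q^{-1}M)$ is a legitimate consequence of min--max. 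All of this reduces the theorem to the single quantitative claim
\begin{align*}
\sum_{v\in B}\|v-c\|^2 \;\ge\; \frac{1}{32\, d^3\,\omega_d^{1/d}}\,|\Omega|^{\frac{d+1}{d}} \quad(\text{up to the lower-order correction}).
\end{align*}

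This is exactly where your proof has a genuine gap. The spread argument you propose --- the bounding box of $\Omega$ has volume at least $|\Omega|$, hence some coordinate has range at least $|\Omega|^{1/d}$ on $\bar{\Omega}$ --- produces only \emph{one} boundary vertex (or a bounded number of them) at distance of order $|\Omega|^{1/d}$ from $c$, and therefore only yields $\sum_{v\in B}\|v-c\|^2\gtrsim |\Omega|^{2/d}$, which is smaller than the required $|\Omega|^{(d+1)/d}$ by a factor of $|\Omega|^{(d-1)/d}$ for every $d\ge 2$. What is actually needed is that on the order of $|\Omega|^{(d-1)/d}$ boundary vertices lie at distance of order $|\Omega|^{1/d}$ from $c$; the extremal case is $\Omega$ a lattice ball, whose boundary has $\approx d\omega_d\rho^{d-1}$ vertices all at distance $\rho=(|\Omega|/\omega_d)^{1/d}$, and this is precisely what produces the constant $\omega_d^{-1/d}$ in $C'$. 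The discrete isoperimetric inequality, which you invoke, gives the \emph{count} $|B|\gtrsim |\Omega|^{(d-1)/d}$ but says nothing about how far those vertices are from $c$ (a priori they could cluster near the centroid), while your bounding-box argument gives the \emph{distance} but not the count; neither alone, nor their naive combination via Cauchy--Schwarz, closes the estimate. Bridging this is the real content of Han--Hua's proof (it requires a decomposition of $\Omega$ into axis-parallel segments, or runs, and a second-moment estimate exploiting that each maximal run of length $m$ forces a pair of boundary vertices separated by $m+1$ in that coordinate, combined with a case analysis on the size of $B$). As written, your argument would be accepted only down to this lemma, and the lemma is the theorem's actual difficulty.
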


%This theorem - and its proof - can be found in \cite{HH}. 
Another investigation gives some control over  the spectrum of a subraph of a \textit{Cayley graph}. We recall that, given a finitely generated group $G$ and a finite generating subset $S$ of $G$, one can define a graph, called Cayley graph and denoted $Cay(G,S)$. If $G$ is infinite, then so is $Cay(G,S)$ and we can use it as a host graph. The result provided by Perrin is the following:

\begin{thm}[Corollary $1$ in \cite{Per2}] \label{corollaire : sigma_1 HP}
Let $\Gamma = (V,E)$ be a Cayley graph with polynomial growth of order $d \geq 2$. There exists $\tilde{C}(\Gamma) >0$ such that for any finite subgraph $\Omega$ of $\Gamma$, we have
\begin{align*}
\sigma_1(\Omega) \leq \tilde{C}(\Gamma) \cdot \frac{1}{|B|^{\frac{1}{d-1}}}.
\end{align*}
\end{thm}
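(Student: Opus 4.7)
The plan is to invoke the Rayleigh--Ritz characterization
\[
\sigma_1(\Omega) \;\leq\; \frac{\sum_{\{v,w\} \in E'} (f(v) - f(w))^2}{\sum_{v \in B} f(v)^2}
\]
valid for any $f \in \R^{\bar{\Omega}}$ with $\sum_{v \in B} f(v) = 0$, and exhibit a ``dipole'' test function adapted to the large-scale geometry of $\Gamma$. The two structural inputs from the polynomial growth hypothesis that I would use are: (a) the volume bound $|B_\Gamma(v,r)| \leq C_0\, r^d$ for all $v \in V$ and $r \geq 1$; and (b) the discrete isoperimetric inequality of Coulhon--Saloff-Coste, $|\Omega| \leq C_1\, |B|^{d/(d-1)}$.

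The core geometric step is to produce two subsets $B^+, B^- \subseteq B$ with $|B^{\pm}| \asymp |B|$ and mutual $\Gamma$-distance $D := d_\Gamma(B^+, B^-) \gtrsim |B|^{1/(d-1)}$. Given such a decomposition, define
\[
f(v) \;:=\; \chi_+(v) - \chi_-(v) - c, \qquad \chi_\pm(v) := \max\!\Bigl(0,\; 1 - \tfrac{3\, d_\Gamma(v, B^\pm)}{D}\Bigr),
\]
where $c$ is chosen so that $\sum_{v\in B} f(v) = 0$. The cutoffs $\chi_\pm$ are $(3/D)$-Lipschitz, hence $f$ is $(6/D)$-Lipschitz on $\bar{\Omega}$, so the numerator is bounded by $(6/D)^2 |E'| \lesssim |\bar{\Omega}|/D^2$ using bounded degree. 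On the boundary, $f \approx \pm 1$ on $B^\pm$ (with $|c|$ small by the balance $|B^+| \asymp |B^-|$), giving a denominator $\gtrsim |B|$. Combining with $|\bar{\Omega}| \lesssim |B|^{d/(d-1)}$ and $D^2 \gtrsim |B|^{2/(d-1)}$ yields
\[
\sigma_1(\Omega) \;\lesssim\; \frac{|B|^{d/(d-1)}}{|B|^{2/(d-1)} \cdot |B|} \;=\; \frac{1}{|B|^{1/(d-1)}},
\]
as claimed.

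The main obstacle is the separation claim $D \gtrsim |B|^{1/(d-1)}$. A naive diameter estimate using only the volume bound (a) would give $\mathrm{diam}_\Gamma(B) \gtrsim |B|^{1/d}$, which is weaker than what is needed by a factor of $|B|^{1/(d(d-1))}$. The required improvement crucially exploits that $B$ is the \emph{boundary} of a connected subgraph, via the isoperimetric inequality (b), together with the vertex-transitivity of the Cayley graph. The natural route is a covering/pigeonhole argument at scale $r \sim |B|^{1/(d-1)}$: one tries to show that the $r$-neighborhood of $B$ cannot be too concentrated (otherwise $|\Omega|$ would violate its isoperimetric upper bound), forcing two well-separated clusters of size $\asymp |B|$ to exist. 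Making this final geometric reduction precise is where I expect the real work of the proof to lie.
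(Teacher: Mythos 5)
This statement is quoted by the paper from \cite{Per2} as background and is not proved in the text, so I am comparing your attempt with Perrin's argument rather than with anything in the present paper. Your analytic framework is the right one --- the Rayleigh quotient with a mean-zero test function on $B$, the Lipschitz bound on the Dirichlet energy via bounded degree, and the Coulhon--Saloff-Coste inequality $|\Omega|\le C|B|^{d/(d-1)}$ are all correct and are genuine ingredients of the real proof --- and you are honest that the separation claim is where the work lies.

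The problem is that your ``core geometric step'' is not merely unproved: as stated it is false, so the dipole construction cannot be completed along these lines. Take $\Gamma=\Z^d$ and let $\Omega$ be the ball $B(0,\rho)$ with a $3$-separated set $S\subset B(0,\rho/2)$ of $\asymp\rho^d$ vertices removed (chosen so that $\Omega$ remains connected). Then $S\subset B$, so $|B|\asymp\rho^d$, while $\mathrm{diam}_\Gamma(B)\le 2\rho+2\asymp|B|^{1/d}$; hence \emph{no} pair of subsets of $B$ can be separated by $\gtrsim|B|^{1/(d-1)}=\rho^{d/(d-1)}\gg\rho$, whatever their cardinalities. Your fallback heuristic does not rescue this: here $|\Omega|\asymp\rho^d$ sits far below the isoperimetric ceiling $C|B|^{d/(d-1)}=C\rho^{d^2/(d-1)}$, so concentration of $B$ creates no tension with input (b). The theorem is nevertheless true for this $\Omega$ (e.g.\ the linear test function $f(v)=v_1/\rho$ gives $\sigma_1\lesssim\rho^{-2}\le C\rho^{-d/(d-1)}$ for $d\ge2$), which shows that the correct test functions are not of your dipole form in general. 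Perrin's proof instead runs the Grigor'yan--Netrusov--Yau / Colbois--Maerten decomposition on the metric measure space $(\bar{\Omega},d_\Gamma,\mu)$ with $\mu$ the counting measure on $B$: the separation scale of the two sets carrying a definite fraction of $|B|$ is chosen adaptively according to how concentrated $\mu$ is, and in the concentrated regime the Dirichlet energy of the plateau functions is controlled by the volume growth of balls at that scale rather than by $|\Omega|$. That case analysis is exactly what is missing from, and incompatible with, the single-scale separation you posit; without it the argument does not close.
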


This theorem is way more general about the class of host graph $\Gamma$ but provides us control over the first non-trivial eigenvalue only, see \cite{Per2} for details. We gave an extension to this result in a precedent article:
% has been proved in \cite{T}:

\begin{thm}[Theorem $5$ in \cite{T}] \label{thm : moi}
Let $\Gamma = Cay(G,S)$ be a polynomial growth Cayley graph of order $d \geq 2$. Let $\Omega$ be a subgraph of  $\Gamma$. Then there exists a constant $\bar{C}(\Gamma) >0$ such that for all $k < |B|$,
\begin{align*}
\sigma_k(\Omega) \leq \bar{C}(\Gamma) \cdot \frac{1}{|B|^{\frac{1}{d-1}}} \cdot k^{\frac{d+2}{d}}.
\end{align*}
\end{thm}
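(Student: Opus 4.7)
The plan is to bound $\sigma_k(\Omega)$ via the min-max characterization of the Steklov eigenvalues, using $k+1$ pairwise well-separated test functions whose Rayleigh quotients can be controlled through the polynomial growth of $\Gamma$. Recall that
\[
R(u) = \frac{\sum_{\{v,w\} \in E'} (u(v)-u(w))^2}{\sum_{v \in B} u(v)^2}
\]
satisfies $\sigma_k(\Omega) \leq \max_{i=0,\ldots,k} R(u_i)$ whenever $u_0,\ldots,u_k$ have pairwise disjoint supports such that no edge of $\Omega$ joins two distinct supports; in this situation both the numerator and denominator quadratic forms are block-diagonal on $\mathrm{span}(u_0,\ldots,u_k)$, so the eigenvalue is controlled by the worst individual Rayleigh quotient. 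It therefore suffices to construct $k+1$ such test functions, each with Rayleigh quotient bounded by $\bar{C}(\Gamma) \cdot k^{(d+2)/d} / |B|^{1/(d-1)}$.

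To produce them, I would fix a scale $R$ of order $(|B|/k)^{1/(d-1)}$, possibly multiplied by a correction factor $k^{\alpha}$ that will be optimized later, and use a greedy/Vitali-type packing argument in the ambient Cayley graph to extract $k+1$ balls $B_R(x_0),\ldots,B_R(x_k)$ whose doubles $B_{2R}(x_i)$ are pairwise disjoint and whose intersections with the boundary $B$ each contain a controlled fraction of $|B|$. The polynomial growth hypothesis $|B_r(v)| \leq \gamma r^d$ controls how quickly the mass of $B$ is consumed during this process and ensures that such a packing is realizable at the prescribed scale. On each ball one defines the tent function $u_i(v) = \max\!\big(0,\,1 - d_\Gamma(v,x_i)/R\big)$; the separation of centers guarantees that no edge of $\Omega$ joins two distinct supports. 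Each Dirichlet energy is then bounded by $\gamma R^{d-2}$ (only edges inside $B_R(x_i)$ contribute, each with $(u_i(v)-u_i(w))^2 \leq R^{-2}$), while the boundary $L^2$-norm is bounded below by a multiple of $|B \cap B_{R/2}(x_i)|$. Plugging in the packing estimate and optimizing $\alpha$ produces the announced inequality.

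The main obstacle is the packing step. In Perrin's proof of Theorem \ref{corollaire : sigma_1 HP}, only a pair of well-placed regions is needed and a bisection-type argument produces them cleanly. For general $k$, distributing $B$ across $k+1$ disjoint balls of a prescribed radius is much more delicate: $\Omega$ is arbitrary and $B$ can be highly irregular, with some regions of $\Gamma$ carrying a concentration of boundary vertices while others contain few. A greedy packing unavoidably loses part of $B$ to balls that intersect the boundary poorly or to the ``safety buffers'' between disjoint supports, and it is precisely this loss that weakens the exponent of $k$ from the naive $k^{1/(d-1)}$ to the stated $k^{(d+2)/d}$. Carefully accounting for these losses, and checking that the scale $R$ is compatible with the irregular geometry of an arbitrary subgraph $\Omega$, is the technical heart of the argument.
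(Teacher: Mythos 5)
First, a point of order: the paper you are reading does not prove this statement at all --- it is imported verbatim from the author's earlier work \cite{T} (Theorem 5 there) and used as background. So there is no in-paper proof to compare against; your proposal has to stand on its own, and as written it does not.

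Your general strategy --- $k+1$ disjointly supported test functions with linearly independent boundary restrictions, energy controlled by the polynomial volume growth, boundary $\ell^2$-norm controlled from below by the boundary mass captured by each support --- is the right one, and the block-diagonalization of the Rayleigh quotient, the edge-count bound $\lesssim \gamma R^{d-2}$, and the lower bound on $\sum_{v\in B}u_i(v)^2$ are all fine. But the entire content of the theorem sits in the step you defer: producing $k+1$ pairwise separated supports that \emph{each} capture a definite fraction $\gtrsim |B|/k$ of the boundary measure. A greedy packing by balls of a single prescribed radius $R\sim (|B|/k)^{1/(d-1)}k^\alpha$ cannot do this for an arbitrary subgraph: the counting measure on $B$ may concentrate in a region of $\Gamma$ too small to contain $k+1$ disjoint $2R$-balls, in which case no choice of centers yields both disjointness and mass, and shrinking $R$ to fit destroys your energy/mass bookkeeping. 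This is exactly why the known arguments (Grigor'yan--Netrusov--Yau, Colbois--Maerten, and their adaptation in \cite{T}) replace fixed-radius balls by a decomposition into sets of \emph{varying} scales (balls or complements of balls), obtained from the $(2,N)$-covering property that polynomial growth provides; the dichotomy in that construction is what absorbs concentration of the measure. You acknowledge this step as ``the technical heart'' but give no argument for it, so the proof is not complete.

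A second, related gap is the exponent. You assert that ``optimizing $\alpha$ produces the announced inequality,'' but the computation is never carried out, and in your scheme the exponent of $k$ is not a free parameter to optimize: with supports each carrying mass $c|B|/k$ at scale $R$, the Rayleigh quotient is forced to be of order $R^{d-2}\,k/|B|$, and the value $k^{(d+2)/d}$ in the statement arises from the specific losses in the multiscale decomposition (in particular from how the radii of the constructed sets are bounded in terms of $|B|$ and $k$), not from tuning a correction factor $k^{\alpha}$ on a single fixed scale. Until the decomposition is actually constructed and the radii of its pieces are estimated, neither the factor $|B|^{-1/(d-1)}$ nor the factor $k^{(d+2)/d}$ is justified.
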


As a corollary, we have: 

\begin{corollaire}[Corollary $6$ in \cite{T}] \label{corollaire : sigma_k}
Let $\Gamma$ be a polynomial growth Cayley graph of order $d \geq 2$ and  $(\Omega_l)_{l=1}^\infty$ be a sequence of subgraphs of $\Gamma$ such that $|\Omega_l| \underset{l \to \infty}{\longrightarrow} \infty$. Fix $k \in \N$. Then we have
\begin{align*}
\sigma_k(\Omega_l) \underset{l \to \infty}{\longrightarrow} 0.
\end{align*}
\end{corollaire}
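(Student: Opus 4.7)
The plan is to combine Theorem \ref{thm : moi} with the classical isoperimetric inequality available in Cayley graphs of polynomial growth. Fixing $k \in \N$ and applying Theorem \ref{thm : moi}, we get, for every $l$ such that $k < |B_l|$,
\[
\sigma_k(\Omega_l) \leq \bar{C}(\Gamma) \cdot \frac{k^{(d+2)/d}}{|B_l|^{1/(d-1)}}.
\]
Since $k$ and $\bar{C}(\Gamma)$ are fixed, this reduces the corollary to showing that $|B_l| \to \infty$ as $l \to \infty$, which in particular also ensures that the hypothesis $k < |B_l|$ needed to invoke Theorem \ref{thm : moi} holds for $l$ large enough.

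For this last step I would invoke the standard isoperimetric inequality for Cayley graphs of polynomial growth of order $d$: there exists a constant $c(\Gamma) > 0$ such that every finite subset $\Omega \subset V$ satisfies $|B| \geq c(\Gamma) \cdot |\Omega|^{(d-1)/d}$. This is a consequence of the Coulhon--Saloff-Coste isoperimetric estimates combined with the polynomial lower bound on the volume of balls; the exponent $(d-1)/d$ is precisely the isoperimetric dimension of $\Gamma$. Substituting this bound into the previous inequality yields
\[
\sigma_k(\Omega_l) \leq \bar{C}(\Gamma) \cdot c(\Gamma)^{-1/(d-1)} \cdot k^{(d+2)/d} \cdot |\Omega_l|^{-1/d},
\]
which tends to $0$ as $|\Omega_l| \to \infty$, giving the conclusion.

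The main obstacle is precisely the implication $|\Omega_l| \to \infty \Rightarrow |B_l| \to \infty$: Theorem \ref{thm : moi} controls $\sigma_k$ in terms of $|B_l|$, not in terms of $|\Omega_l|$, and in a general infinite locally finite connected graph these two quantities are not comparable (e.g.\ taking $\Omega_l = \{0,1,\dots,l\}$ in the half-line $\N$ produces $|B_l|=1$ while $|\Omega_l| \to \infty$). Vertex-transitivity together with polynomial growth of $\Gamma$ is precisely what rules out such pathological subgraphs, via the isoperimetric inequality. Once that ingredient is in place, the corollary follows painlessly from Theorem \ref{thm : moi}.
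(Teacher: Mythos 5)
Your proposal is correct and follows essentially the same route as the paper: apply the eigenvalue bound of Theorem \ref{thm : moi} and reduce everything to the implication $|\Omega_l| \to \infty \Rightarrow |B_l| \to \infty$, exactly as the paper does for the analogous Corollary \ref{cor : zero}. The only difference is that you justify that implication explicitly via the Coulhon--Saloff-Coste isoperimetric inequality, a detail the paper leaves implicit, and your substitution $|B_l| \geq c(\Gamma)|\Omega_l|^{(d-1)/d}$ is carried out correctly.
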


All these theorems follow from the investigation upon one class of host graphs $\Gamma$, which are Cayley graphs of polynomial growth groups. This consideration leads to a natural question: 
\begin{center}
    \textit{What can we say about the eigenvalues of subgraphs of a host graph $\Gamma$, whose growth rate is more than polynomial?}
\end{center}

A first class of graphs  we can think of is that of trees. In \cite{He-Hua}, the authors find upper bounds for the eigenvalues of a finite tree. Their investigations lead to the following result:

\begin{thm}[Theorem $1.1$ and $1.5$ in \cite{He-Hua}] \label{thm : HH trees}
Let $\mathcal{T}$ be a finite tree with (uniformly) bounded degree $D$. Let $B$ be the boundary of the tree, i.e the set of vertices of degree one. Then we have
\begin{align*}
\sigma_1  \leq \frac{4 (D-1)}{|B|}.
\end{align*}
Higher Steklov eigenvalues are bounded as well: for all $k = 2, \ldots, |B|-1$, we have
\begin{align*}
\sigma_k & \leq \frac{8(D-1)^2(k-1)}{|B|}.
\end{align*}
\end{thm}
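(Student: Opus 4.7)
The plan is to apply the min-max characterization of Steklov eigenvalues and build $(k+1)$-dimensional trial subspaces from indicator functions of subtrees obtained by cutting $k$ well-chosen edges of $\mathcal{T}$. The combinatorial heart is a \emph{centroid-edge lemma}: in any tree of maximum degree $D$ with leaf set $B$, and for any target size $m \leq |B|/2$, there exists an edge whose removal isolates a pendant subtree $T_*$ with $|B \cap T_*| \in [m/(D-1), m]$. I would prove this by rooting the tree at an arbitrary non-leaf vertex, setting $b(v)$ equal to the number of leaves in the subtree rooted at $v$, and descending greedily along the child with the largest $b$-value while $b > m$. At the stopping vertex $v_*$, its parent has at most $D-1$ children (at most $D$ if the parent is the root, a case where the weaker bound $b(v_*) \geq |B|/D \geq m/(D-1)$ still holds because $D \geq 2$) whose $b$-values sum to more than $m$, so the maximum-$b$ child $v_*$ satisfies $b(v_*) \geq m/(D-1)$, while $b(v_*) \leq m$ by the stopping rule.

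For $\sigma_1$, applying the lemma with $m = |B|/2$ yields two subtrees $T_1, T_2$ with $|B_i| := |B \cap T_i| \geq |B|/(2(D-1))$. On the trial subspace $V_2 = \mathrm{span}(\mathbf{1}_{T_1}, \mathbf{1}_{T_2})$, a function $f = c_1 \mathbf{1}_{T_1} + c_2 \mathbf{1}_{T_2}$ has Dirichlet energy $(c_1 - c_2)^2$ (only the cut edge contributes) and boundary norm $c_1^2 |B_1| + c_2^2 |B_2|$. Optimizing in $(c_1, c_2)$ gives a Rayleigh quotient of $|B|/(|B_1| |B_2|) \leq 4(D-1)/|B|$.

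For $\sigma_k$ with $k \geq 2$, I would iterate the lemma $k$ times on successive residual subtrees, with fixed target size $m = |B|/(2k)$, producing $k$ pairwise disjoint pendant subtrees $T_1, \ldots, T_k$ of $\mathcal{T}$ with $|B \cap T_i| \geq |B|/(2k(D-1))$. Since each cut removes at most $|B|/(2k)$ original boundary vertices, the residual subtree $T_0 = \mathcal{T} \setminus \bigcup_{i=1}^k T_i$ satisfies $|B \cap T_0| \geq |B|/2$ throughout, which keeps the lemma applicable at every step. On $V_{k+1} = \mathrm{span}(\mathbf{1}_{T_0}, \mathbf{1}_{T_1}, \ldots, \mathbf{1}_{T_k})$, the Dirichlet energy of $f = \sum_i c_i \mathbf{1}_{T_i}$ picks up exactly the $k$ cut edges and equals $\sum_{i=1}^k (c_0 - c_i)^2$, while the boundary norm is $\sum_{i=0}^k c_i^2 |B_i|$. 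Minimizing in $c_0$ and applying Cauchy--Schwarz yields a bound of the form $(|B|/|B_0|)(1/\min_i |B_i|) \leq 4(D-1)k/|B|$, which for $k \geq 2$ and $D \geq 2$ is at most the claimed $8(D-1)^2(k-1)/|B|$.

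The main obstacle is the iterative partition: one must verify that at each of the $k$ pruning steps, a pendant subtree with leaf count in the narrow window $[|B|/(2k(D-1)), |B|/(2k)]$ can be isolated by a single edge cut from the residual subtree. The calibration $m = |B|/(2k)$ is chosen precisely so that the residual leaf count stays above $|B|/2 \geq 2m$ at all times, which keeps the centroid-edge lemma's hypothesis satisfied throughout the construction. Once the partition is secured, the Rayleigh-quotient estimate reduces to a routine convex optimization in the coefficients $c_i$.
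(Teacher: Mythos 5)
This statement is quoted in the paper from \cite{He-Hua} and is not proved there, so there is no in-paper argument to compare against; I am judging your proposal on its own. Your $\sigma_1$ argument is correct: the centroid-edge lemma (greedy descent along the maximal-$b$ child) is valid, it produces an edge splitting $B$ into parts of sizes $|B_1|,|B_2|$ with $|B_1||B_2|\ge \frac{|B|}{2(D-1)}\cdot\frac{|B|}{2}$, and the two-dimensional subspace of indicators then gives $\sigma_1\le |B|/(|B_1||B_2|)\le 4(D-1)/|B|$ via the standard min-max.

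The higher-eigenvalue part has a genuine gap at the assertion that the iteration produces ``$k$ pairwise disjoint pendant subtrees $T_1,\dots,T_k$ \emph{of} $\mathcal{T}$'' and that the Dirichlet energy of $f=\sum_i c_i\mathbf{1}_{T_i}$ equals $\sum_{i=1}^k(c_0-c_i)^2$. Your lemma yields at step $i$ a pendant subtree of the \emph{residual} tree $T_0^{(i-1)}$, not of $\mathcal{T}$: the attachment vertex $u_j$ of an earlier cut edge $(u_j,v_j)$ survives in the residual tree and may later be swallowed by $T_i$, in which case the edge $(u_j,v_j)$ joins $T_i$ to $T_j$ rather than $T_j$ to $T_0$. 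The quotient tree on $\{T_0,\dots,T_k\}$ is then a general tree on $k+1$ vertices, and one can arrange (e.g.\ with a ``broom'': a path ending in a small binary skeleton off which hang $k$ subtrees each carrying about $|B|/(2k)$ leaves) that the last pruned piece $T_k$ is the depleted skeleton containing all $k-1$ earlier attachment vertices, so that the quotient is a star centered at $T_k$ with $|B\cap T_k|\approx |B|/(2k)$. On that configuration the test function $c_k=1$, all other $c_i=0$ already has Rayleigh quotient about $k\cdot 2k/|B|$, so the maximum over your subspace is at least of order $k^2/|B|$ and cannot yield the claimed linear-in-$(k-1)$ bound. Note that if the $T_i$ really were pairwise disjoint pendant subtrees of $\mathcal{T}$ itself, the star structure around $T_0$ would be automatic (a pendant subtree has exactly one exit edge in $\mathcal{T}$, which cannot land in another disjoint pendant subtree without giving that one two exit edges); so the missing ingredient is precisely a construction guaranteeing pendancy in $\mathcal{T}$, not merely in the residual tree. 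As written, your argument proves only a bound of order $k^2(D-1)/|B|$ for $\sigma_k$ (which, incidentally, would still suffice for the way the result is used in this paper), not the stated $8(D-1)^2(k-1)/|B|$.
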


As stated by Remark $1.7$ of \cite{He-Hua}, we can consider as the host graph $\Gamma$ the Cayley graph of a free group and use this result to estimate the Steklov eigenvalues of a subgraph $\Omega$ of $\Gamma$. Since the growth rate of such a host graph is exponential, we now have  a completely new class of host graphs for which we can estimate their subgraphs eigenvalues.

\medskip

This paper's objective is to study the subgraphs's eigenvalues of a host graph $\Gamma$ which is roughly isometric to the hyperbolic plane $\H^2$ (see Definition \ref{def : hyperbolic plane}). The hyperbolic plane is a Cartan-Hadamard manifold of constant sectional curvature $-1$. Then $\Gamma$ can be seen as a discrete analog of such a manifold. Because of its relation with $\H^2$, the growth rate of $\Gamma$ is exponential, and then $\Gamma$ does not enter the class of host graphs of Theorems \ref{corollaire : sigma_1 HH}, \ref{corollaire : sigma_1 HP} and \ref{thm : moi}. 

Despite a growth rate identical to that of the trees, the structure of $\Gamma$ is very different from the latter, because of its connection with $\H^2$. Therefore, the method we will use to obtain upper bounds has nothing to do with the one used in \cite{He-Hua}. Indeed, He and Hua were able to work directly on the trees and use the great ease of disconnection of the trees as a tool to obtain the bounds of Theorem \ref{thm : HH trees}, while on our side we will use the proximity between $\Gamma$ and $\H^2$ to obtain upper bounds. 
\medskip

There are many graphs which are roughly isometric to the hyperbolic plane. This paper will focus on a particular class of such graphs, coming from a tiling of $\H^2$ associated with a triangle group. We shall refer to such a graph as \textit{triangle-tiling graph}. 

Triangle groups are part of the Coxeter groups, which can be seen as groups generated by reflections. These groups have been studied by many authors, see for instance \cite{Bou, Hil, Hum}. Triangle groups are Coxeter groups with three generators, that can be regarded as reflections through the sides of a triangle. They lead to many beautiful geometric constructions and tiling, see \cite{Bea, CBG, Mag, Wiki_pavage_triangulaire}.  We will recall in Sect. \ref{section : 2} hereafter the notions that are required for the understanding of the paper. 
\medskip

Our main result is the following:

\begin{thm} \label{thm : principal}
Let $\Gamma$ be a triangle-tiling graph. Then there exists a constant $C = C(\Gamma) >0$ such that for all subgraph $\Omega$ of $\Gamma$ and all $k < |B|$, we have
\begin{align*}
\sigma_k(\Omega) \leq C(\Gamma) \cdot \frac{1}{|B|} \cdot k^{2}.
\end{align*}
\end{thm}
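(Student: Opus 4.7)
The strategy announced in the abstract is to reduce the discrete bound to a continuous Steklov estimate on an associated hyperbolic domain and then to import this estimate back via the discretization machinery of \cite{CGR}. This is the route I would follow.

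First, I would attach to $\Omega$ a bounded companion domain $N = N(\Omega) \subset \H^2$ obtained as the union of the tiles of the triangle tiling corresponding to the vertices of $\bar{\Omega}$. Because the triangle group acts cocompactly and all tiles are isometric copies of a fixed fundamental triangle, $N$ and $\Omega$ are roughly isometric with parameters depending only on $\Gamma$; in particular, each boundary vertex contributes a uniformly bounded piece of $\partial N$, which yields $\mathrm{Area}(N) \asymp |\bar{\Omega}|$ and $L(\partial N) \asymp |B|$ with multiplicative constants depending only on $\Gamma$.

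Next, I would establish a continuous bound of the form $\sigma_k(N) \leq C_1 \cdot k^2 / L(\partial N)$. The natural route is the variational method: partition $\partial N$ into $k+1$ boundary arcs of comparable length, and for each arc build a test function supported in a thin collar of $\partial N$ inside $N$, of width optimised in $k$ and $L(\partial N)$. The geometry of such a collar in $\H^2$ is uniformly controlled because $\partial N$ is a finite union of geodesic segments of bounded length meeting at bounded angles, so the Rayleigh quotients can be estimated explicitly, and the supports of the $k+1$ test functions remain pairwise disjoint provided the collar width is comparable to $L(\partial N)/k$. The desired bound on $\Omega$ then follows by combining this estimate with \cite[Theorem 3]{CGR}, which compares $\sigma_k(\Omega)$ with $\sigma_k(N)$ up to a multiplicative constant depending only on the discretization data, together with the equivalence $L(\partial N) \asymp |B|$.

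The principal obstacle is this last step: one must verify that \cite[Theorem 3]{CGR} applies \emph{uniformly in} $\Omega$, so that the resulting constant $C(\Gamma)$ depends only on $\Gamma$. Concretely, this amounts to showing that the family $\{(N(\Omega), \Omega)\}_\Omega$ admits common discretization data: a uniform mesh and packing radius, a uniformly bounded collar of $\partial N(\Omega)$ with controlled geometry, and a two-sided comparison between the natural measures on $B$ and on $\partial N(\Omega)$. The rigidity coming from the triangle-group action on the tiling should pay off here, since it guarantees that, no matter how irregular or elongated $\Omega$ is inside $\Gamma$, the local structure of $N(\Omega)$ near any boundary point is isometric to one of finitely many model configurations. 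Once this uniformity is in place, the proof reduces to chaining the variational estimate from the second step with the comparison from \cite{CGR}.
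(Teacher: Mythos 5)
Your overall architecture (companion domain in $\H^2$, continuous Steklov bound, transfer back through the discretization results of \cite{CGR}) is the same as the paper's, but your construction of the domain has a genuine gap that the paper goes to considerable length to avoid. You define $N(\Omega)$ as the union of the full tiles $T_w$ over $w\in\bar{\Omega}$. This fails in two ways. First, by definition of a subgraph two boundary vertices are never adjacent in $\Omega$, yet their tiles may share an edge of the tessellation; gluing the full tiles then places such $w_1,w_2\in B$ at bounded distance in $N$ while $d_\Omega(w_1,w_2)$ can be as large as the diameter of $\Omega$. This destroys any rough isometry between $\Omega$ and the discretization of $N$ with constants independent of $\Omega$, which is precisely the uniformity you identify as the principal obstacle; Appendix \ref{appendix : contre expl} of the paper exhibits an explicit family of subgraphs where this occurs. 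Second, a boundary vertex can be completely surrounded by interior vertices (take $\Omega$ to be a large ball deprived of one vertex $v^*$), in which case $T_{v^*}$ contributes nothing to $\partial N$; a subgraph with many such isolated boundary vertices violates your claimed lower bound $L(\partial N)\gtrsim |B|$, which is the very inequality that converts the continuous estimate into a bound in terms of $|B|$. The paper circumvents both problems by shrinking each tile to a concentric triangle $T_w'$, joining two shrunken triangles by a thin quadrilateral only when the corresponding vertices are adjacent in $\Omega$, and excising a small ball around every boundary vertex so that each $w\in B$ contributes a circle of fixed perimeter to $\Sigma$, giving $|\Sigma|\ge C\,|B|$.

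Two further points. The polygonal domain must also be smoothed at its corners and its metric modified near the boundary to produce the cylindrical collar $[0,1]\times\Sigma$ required by the hypotheses of \cite[Theorem 3]{CGR}; you flag the uniformity of the discretization data but not the fact that the raw domain simply does not satisfy those hypotheses without this surgery. Finally, your bookkeeping of the powers of $k$ is off: the discretization inequality is not a clean two-sided multiplicative comparison but carries an extra factor of $k$ in the relevant direction, $\sigma_k(\tilde{V},V_\Sigma)\le C\,\sigma_k(N,g')\cdot k$. The paper therefore only needs the \emph{linear} continuous bound $\sigma_k(N,g)\le C\,k/|\Sigma|$ (quoted from \cite{CEG1}) to reach $k^2$; if you proved $\sigma_k(N)\le C\,k^2/L(\partial N)$ by your collar construction and then applied the actual discretization inequality, you would end with $k^3$.
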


As we will see in Sect. \ref{section : 2}, the host graph $\Gamma$ is defined from the choice of three integers. As a consequence, we will see that there are infinitely many triangle-tiling graphs.

As a corollary, we obtain the interesting fact:

\begin{corollaire} \label{cor : zero}
Let $(\Omega_l)_{l \geq 1}$ be a family of subgraphs  of $\Gamma$ such that $|\Omega_l| \underset{l \to \infty}{\longrightarrow} \infty$. Then for all $k \in \N$ fixed,
\begin{align*}
\sigma_k(\Omega_l) \underset{l \to \infty}{\longrightarrow} 0.
\end{align*}
\end{corollaire}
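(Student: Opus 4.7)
My plan is to deduce the corollary directly from Theorem \ref{thm : principal}. For a fixed $k\in\N$ and any $l$ large enough that $k<|B_l|$, the theorem gives
\begin{align*}
\sigma_k(\Omega_l)\leq C(\Gamma)\cdot\frac{k^2}{|B_l|},
\end{align*}
so the statement reduces to verifying that $|B_l|\to\infty$ as $l\to\infty$. Since $k$ and $C(\Gamma)$ do not depend on $l$, a divergent boundary immediately forces the right-hand side to $0$.

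The key point is therefore a linear isoperimetric inequality for $\Gamma$: there exists $h=h(\Gamma)>0$ such that $|B|\geq h\cdot|\Omega|$ for every finite subgraph $\Omega$ of $\Gamma$. I would justify this by combining two ingredients. First, the hyperbolic plane $\H^2$ satisfies a linear isoperimetric inequality (it has positive Cheeger constant, $\operatorname{Ch}(\H^2)=1$). Second, $\Gamma$ is roughly isometric to $\H^2$ and has bounded degree, since each vertex of the triangular tiling lies on only finitely many tiles. Non-amenability is preserved under rough isometry between bounded-degree graphs, and more concretely one can transport the continuous Cheeger inequality to $\Gamma$ using exactly the same discretization machinery (in the spirit of Theorem 3 of \cite{CGR}) that underlies the proof of Theorem \ref{thm : principal}.

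Granted the isoperimetric inequality, the hypothesis $|\Omega_l|\to\infty$ yields $|B_l|\geq h\,|\Omega_l|\to\infty$, so for $l$ large enough $|B_l|>k$ and
\begin{align*}
\sigma_k(\Omega_l)\leq \frac{C(\Gamma)\,k^2}{h\,|\Omega_l|}\longrightarrow 0,
\end{align*}
which is the desired conclusion.

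The only non-routine step is the linear isoperimetric inequality; Theorem \ref{thm : principal} does all the heavy lifting. I expect the main obstacle to be the bookkeeping between the different notions of boundary involved—the vertex boundary $B$ of Definition \ref{def : subgraph}, the edge boundary of a subgraph, and the Riemannian boundary of an associated domain in $\H^2$. These quantities are all comparable up to multiplicative constants thanks to the bounded degree of $\Gamma$ and the rough isometry with $\H^2$, but one must choose the comparison carefully so that the constant $h$ really depends only on $\Gamma$ and not on the sequence $(\Omega_l)$.
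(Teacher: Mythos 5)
Your proposal is correct and follows essentially the same route as the paper: reduce the corollary to the claim that $|\Omega_l|\to\infty$ forces $|B_l|\to\infty$ and then apply Theorem \ref{thm : principal}. The only difference is that the paper simply asserts this implication, whereas you supply a valid justification via the non-amenability of $\Gamma$ (inherited from $\H^2$ under rough isometry), which gives the linear bound $|B|\geq h(\Gamma)\,|\Omega|$.
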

The number $\sigma_k(\Omega_l)$ is of course defined if and only if $|B_l| < k$. This condition is satisfied for $l$ big enough thanks to the assumption that $|\Omega_l| \longrightarrow \infty$. 

\medskip

Our approach is sketched this way: we define a  triangle-tiling graph $\Gamma$ that we use as a host graph and show that it is roughly isometric to $\H^2$ (see Definition \ref{def : quasi isom}). Thanks to the rough isometry, we can naturally associate to a subgraph $\Omega$ of $\Gamma$ a bounded domain $N$ of $\H^2$, whose boundary will be denoted $\Sigma$. We can then use results from \cite{CEG1} to give upper bounds for $\sigma_k(N)$.

Once this task is completed we use the work of Colbois et al. presented in \cite{CGR} in order to discretize a Riemannian manifold with boundary $(N, g')$, obtained as a deformation of the domain $N$ (this deformation is necessary since we have to satisfy the assumptions of \parencite[Theorem 3]{CGR}). This discretization will give us a path linking the eigenvalues of $N$ and the ones of $\Omega$, which will allow us to conclude.
\medskip

% \textbf{Plan of the paper.} In section \ref{section : 2}, we create $\Gamma$ and state some of its properties. In section \ref{section : 3}, we give a method to associate a domain $N$ of $\H^2$ to a subgraph $\Omega$ of $\Gamma$. In section \ref{section : 4}, we proceed to a change of the metric on $N$ in order to get a Riemannian manifold $(N, g')$ that can be discretized. In section \ref{section : 5}, we discretize the manifold. In section \ref{section : 6}, we give the spectral link between the discretization $(\tilde{V}, \tilde{E}, V_\Sigma)$ and the subgraph $\Omega$, which will allow us to conclude in section \ref{section : 7}.

Our strategy can be summed up in the diagram below:

\begin{center}
\begin{tikzpicture}
     \node (1) at (-10,-1) {$\Gamma$} ;
     \node (2) at (-4,-1) {$\Omega$} ;
     \node (3) at (-10,-4) {$\H^2$} ; 
     \node (4) at (-4,-4) {$(N, g)$} ;
     \node (5) at (2,-4) {$(N, g')$} ;
     \node (6) at (2,-1) {$(\bar{V}, \bar{E}, V_\Sigma)$} ;
     
     \draw[<->] (1) to node[pos=0.5]{\; roughly isometric}  (3) ;
     \draw[->] (1) to node[pos=0.5, above]{subgraph} (2) ;
     \draw[->] (3) to node[pos=0.5, above]{domain} (4) ;
     \draw[<->] (2) to node[pos=0.5]{structure preserved} (4) ;
     \draw[->] (4) to node[pos=0.5, above]{change of metric}  (5) ;
     \draw[->] (5) to node[pos=0.5]{discretization} (6) ;
     \draw[<->] (2) to node[pos=0.5, above]{roughly isometric}  (6) ;

\end{tikzpicture}

\end{center}
Here, by \textit{structure preserved}, we mean that the structural information of the subgraph $\Omega$ can be read in the domain $N$, see the rest of the paper for more details. Moreover, in the diagram, $P \longleftrightarrow Q$ reflects the idea that $P$ is in some sense analog to $Q$, and $P \longrightarrow Q$ reflects the idea that $Q$ is obtained from $P$. More details are given in the rest of the paper.
\medskip

Our result holds for subgraphs of any triangle-tiling graph. However, there exist many other graphs that are roughly isometric to the hyperbolic plane, and that we could use as  host graphs. This remark naturally leads to many interesting interrogations, that we will consider and develop in Sect. \ref{sect : interrogation}. In particular, one may ask if the result is still true when using other host graphs roughly isometric to $\H^2$. This leads to the following open question (\Cref{quest : autre graphe}):
\begin{center}
   \textit{If $\Gamma$ is any graph roughly isometric to the hyperbolic plane, is there a constant $C = C(\Gamma)$ such that a bound as in Theorem $\ref{thm : principal}$ exists?}
\end{center}
Moreover, if $(\Omega_l)_{l\ge 1}$ is a sequence of subgraphs such that $|\Omega_l| \underset{l \to \infty}{\longrightarrow} \infty$, then in many cases (\Cref{corollaire : sigma_k}, \Cref{cor : zero}, \parencite[Corollary 1.4]{He-Hua}) the behaviour of $\sigma_1(\Omega_l)$ satisfy $\sigma_1(\Omega_l)  \underset{l \to \infty}{\longrightarrow} 0$. However, that is not always true, see \parencite[Example 3.7]{HH}. One may ask if the property is preserved under rough isometry (\Cref{question : comportement}): 
\begin{center}
    \textit{Let $\Gamma_1, \Gamma_2$ be two roughly isometric host graphs. Let us assume that in $\Gamma_1$, each sequence of subgraphs $(\Omega_l)_{l\ge 1}$ such that  $|\Omega_l| \underset{l \to \infty}{\longrightarrow} \infty$ satisfies $\sigma_1(\Omega_l)  \underset{l \to \infty}{\longrightarrow} 0$. Does $\Gamma_2$ also have this property?} 
\end{center}
As said before, these interrogations, and other (including some about higher dimensional constructions), will be asked  in Sect. \ref{sect : interrogation}.

\medskip

\textbf{Notation.} Throughout this paper, we shall work on graphs, on domains of $\H^2$ and on a manifold obtained from the domains. As stated before, the host graph will be denoted $\Gamma = (V, E)$. A subgraph of $\Gamma$ is denoted $\Omega$, while $N$ and $\tilde{N}$ are used to speak about domains of $\H^2$. We use $g$ to denote the metric of $\H^2$ and $g'$ the one of the manifold; hence $(N, g')$ is the notation we will use to speak about the manifold. A discretization of the manifold will be called $(\tilde{V}, \tilde{E}, V_\Sigma)$. We shall use the letters $v, w$ to speak about vertices of graphs and $x, y, z$ for elements of the domains or manifold. Several constants will appear, we shall call them $C_1, C_2, \ldots$; each $C_l$ is used exactly once. 
\medskip

\textbf{Plan of the paper.} 
In Sect. \ref{section : 2}, we define precisely what is a triangle-tiling graph. In Sect. \ref{section : 3}, we make the constructions. The leading idea is actually simple: we want to associate a domain to a subgraph. However, we encounter some difficulties for different reasons. One of them is the question of the isolated boundary vertices, also called \textit{bad boundary vertices} in \parencite[Def. $3.1$]{HH}. We solve this problem in Sect. \ref{subsection : domaine rugueux}. Another difficulty comes from the fact that we want the domain to have a smooth boundary. This is the object of Sect. \ref{subsection : domaine lisse}.  In Sect. \ref{section : 4} we prove Theorem \ref{thm : principal}. In order to do so, we want to use Theorem $3$ of \cite{CGR}. Therefore we have to make sure that the hypotheses of the theorem are verified, which is the object of Sect. \ref{subsection : chmt metric}. Once it is done, we apply the theorem and conclude the proof.

\medskip

\textbf{Acknowledgment.} 
I would like to warmly thank my thesis supervisor Bruno Colbois for
having allowed me to work on this subject as well as for his uncountable help and piece of advice which enabled
me to resolve the difficulties encountered. I also wish to thank Niel Smith, Antoine
Gagnebin and the anonymous referees for their careful proofreading of this paper and for their various remarks which have led to its improvement.

\section{Triangle groups and associated triangle-tiling graphs} \label{section : 2}
 
Let us begin by explaining what triangle groups are and what links they have with tessellations of the model spaces $\mathbb{S}^2, \mathbb{E}^2$ and $\H^2$. When it is done, we can explain how to associate a triangle-tiling graph $\Gamma$ to a triangle group.

\begin{defn}
Let $p, q, r \geq 2$ be integers. The triangle group $T^*(p,q,r)$ associated is 
\begin{align*}
T^*(p,q,r)= \langle P, Q, R : P^2=Q^2=R^2=(PQ)^r=(QR)^p=(RP)^q=1 \rangle.
\end{align*} 
\end{defn}
In order to see the links between such an abstract group and a group of reflection, one can think about  $P, Q, R$ as reflections through the opposite sides of a triangle with angles $\frac{\pi}{p}, \frac{\pi}{q}, \frac{\pi}{r}$ respectively.
\medskip

It is well known that a triangle with angles $\alpha, \beta, \gamma$ satisfies $\alpha + \beta + \gamma > \pi$ in the spherical case, while we have  $\alpha + \beta + \gamma = \pi$ in the Euclidean case and that $\alpha + \beta + \gamma < \pi$ in the hyperbolic case. Hence we can regroup the unordered triplets $p,q,r$  according to the value of $\frac{1}{p}+\frac{1}{q}+\frac{1}{r}$. If the number obtained is greater than $1$ we have to think about a spherical triangle, if it is equal to $1$ we have to think about a Euclidean one and if it is less than $1$ we have to think about a hyperbolic one.
\medskip

As said before, we want to work on graphs that have exponential growth rates, therefore we will only consider the third case in this paper. Since one may ask if our result is still true for the two other cases, we remark that in the Euclidean case, the triangle group has polynomial growth rate and then has already been studied in \cite{T}. Regarding the spherical case, the triangle group is finite and hence one can theoretically compute all different possible situations.  
%are roughly isometric to the hyperbolic plane, .
\medskip

Then, from now on, $p, q, r \geq 2$ will be integers satisfying
\begin{align*}
\frac{1}{p}+\frac{1}{q}+\frac{1}{r} <1.
\end{align*} 

\begin{defn} \label{def : hyperbolic plane}
We denote by $\H^2$ the hyperbolic plane, represented here by Poincaré's disk model, which is 
\begin{align*}
\H^2 = \{(x,y) \in \R^2 : x^2+y^2<1\},
\end{align*}
endowed with the Riemannian metric 
\begin{align*}
g(x,y) = 4 \cdot \frac{dx^2 +dy^2}{(1-x^2-y^2)^2}.
\end{align*}

We denote by $d_g( \cdot, \cdot)$ the distance induced by the metric $g$. 
\end{defn}

\begin{rem}
It is a known fact that for any triplet $0 \le \alpha, \beta, \gamma <\pi$ such that $\alpha + \beta + \gamma < \pi$, there exists a hyperbolic triangle with angles $\alpha, \beta, \gamma$. Moreover, there is a unique one up to isometry \parencite[Exercise 7.12]{Bea}.
Hence, given $p, q, r$ as before, there exists a unique triangle which has angles $\frac{\pi}{p}, \frac{\pi}{q}, \frac{\pi}{r}$.
\end{rem}

We state now Theorem $2.8$ of \cite{Mag}:
\begin{thm}
Let $P, Q, R$ be the reflections in the sides of a hyperbolic triangle $\Delta_0$ with angles $\frac{\pi}{p}, \frac{\pi}{q}, \frac{\pi}{r}$. The images of $\Delta_0$ under the action of the distinct elements of the group $T^*(p,q,r)$ generated by $P, Q, R$ fill the hyperbolic plane without gaps and overlapping.
\end{thm}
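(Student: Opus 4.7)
The plan is to invoke Poincaré's polygon theorem for hyperbolic fundamental polygons, taking $\Delta_0$ itself as the candidate fundamental domain and treating each of its three sides as self-paired by the corresponding reflection. This reduces the task to verifying Poincaré's hypotheses---the side-pairing maps are isometries and every vertex cycle has angle sum equal to a submultiple of $2\pi$---after which the theorem delivers both the tiling property and, as a bonus, the fact that the resulting subgroup of $\mathrm{Isom}(\H^2)$ admits the abstract presentation $T^*(p,q,r)$.

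First I would set up the side-pairings: the side opposite the angle $\pi/p$ is mapped to itself (with reversed orientation) by $P$, and analogously for $Q$ and $R$. These are genuine orientation-reversing isometries of $\H^2$. Next I would compute the three vertex cycles. Consider the vertex $v$ with interior angle $\pi/p$; it lies at the intersection of the two sides fixed by $Q$ and $R$. Alternately reflecting $\Delta_0$ across these two sides traces out the dihedral subgroup $\langle Q,R\rangle \subset \mathrm{Isom}(\H^2)$, and since the composition $QR$ is a hyperbolic rotation by angle $2\pi/p$ about $v$, the cycle closes after exactly $2p$ reflections, producing $2p$ copies of $\Delta_0$ whose angles at $v$ sum to $2p \cdot (\pi/p) = 2\pi$. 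The analogous computation at the other two vertices yields the cycle relations $(QR)^p = (RP)^q = (PQ)^r = 1$, which combined with the involutions $P^2 = Q^2 = R^2 = 1$ coming from the self-pairings are exactly the defining relations of $T^*(p,q,r)$.

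With these hypotheses in hand, Poincaré's polygon theorem concludes that the subgroup $G \subset \mathrm{Isom}(\H^2)$ generated by $P,Q,R$ is discrete, that $\Delta_0$ is a strict fundamental domain for $G$ (so its $G$-translates cover $\H^2$ and have pairwise disjoint interiors), and that the natural surjection $T^*(p,q,r) \twoheadrightarrow G$ is in fact an isomorphism; in particular the action is faithful, so distinct abstract elements produce distinct images of $\Delta_0$, which is exactly the stated conclusion. I expect the main obstacle to be the careful verification of Poincaré's technical hypotheses---especially ruling out problematic (e.g.\ parabolic) vertex cycles and establishing the metric completeness needed to invoke the theorem---which in our compact-triangle setting is essentially automatic but is where a fully from-scratch argument must do its real work, either by an inductive ``developing'' of the tiling edge by edge (using the $\pi/n$ angle condition to guarantee local closure around each vertex and a word-length induction to preclude overlap) or by a direct completeness argument on the quotient space.
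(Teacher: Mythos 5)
The paper does not actually prove this statement: it is quoted verbatim as Theorem 2.8 of the cited book of Magnus and used as a black box, so there is no in-paper argument to compare yours against. Judged on its own, your outline is the standard and correct route to this classical result: take $\Delta_0$ as the candidate fundamental polygon with each side self-paired by the reflection in it, check that every vertex cycle closes up with total angle $2\pi$ (which is exactly where the hypothesis that the angles are $\pi/p$, $\pi/q$, $\pi/r$ with integer $p,q,r$ enters, giving the rotations $QR$, $RP$, $PQ$ of orders $p$, $q$, $r$), and invoke Poincar\'e's polygon theorem to get discreteness, the tiling property, and the presentation simultaneously. Your vertex-cycle computation is consistent with the paper's labelling convention ($P$ the reflection in the side opposite the $\pi/p$ vertex, hence $(QR)^p=1$). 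Two small points to be careful about if you write this out in full. First, the most common textbook form of Poincar\'e's theorem assumes orientation-preserving side-pairings; since your pairings are reflections you need either the reflection-group version of the theorem (as in Maskit or Ratcliffe, or Coxeter's theorem on discrete groups generated by reflections) or to pass to the doubled triangle $\Delta_0\cup R\Delta_0$ as a fundamental domain for the index-two rotation subgroup $\langle QR, RP\rangle$ and then deduce the statement for the full group. Second, as you note, the completeness hypothesis is automatic here because $p,q,r\ge 2$ are finite so $\Delta_0$ is compact with no ideal vertices; stating that explicitly closes the argument. With those caveats addressed, the proposal is a complete and correct proof strategy, and it in fact yields more than the quoted statement (faithfulness of the action and the isomorphism with the abstract Coxeter presentation), which the paper implicitly relies on when identifying tiles with group elements.
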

This means that the choice of the numbers $p, q, r$ gives rise to a tessellation of the hyperbolic plane. Moreover, we know \parencite[Theorem 7.4.1]{Bea} that reflections through geodesics are isometries of $\H^2$. Hence, each tile of the tessellation is a triangle which is isometric to the initial one.

\begin{figure}[H] 
\centering
\includegraphics[scale=0.2]{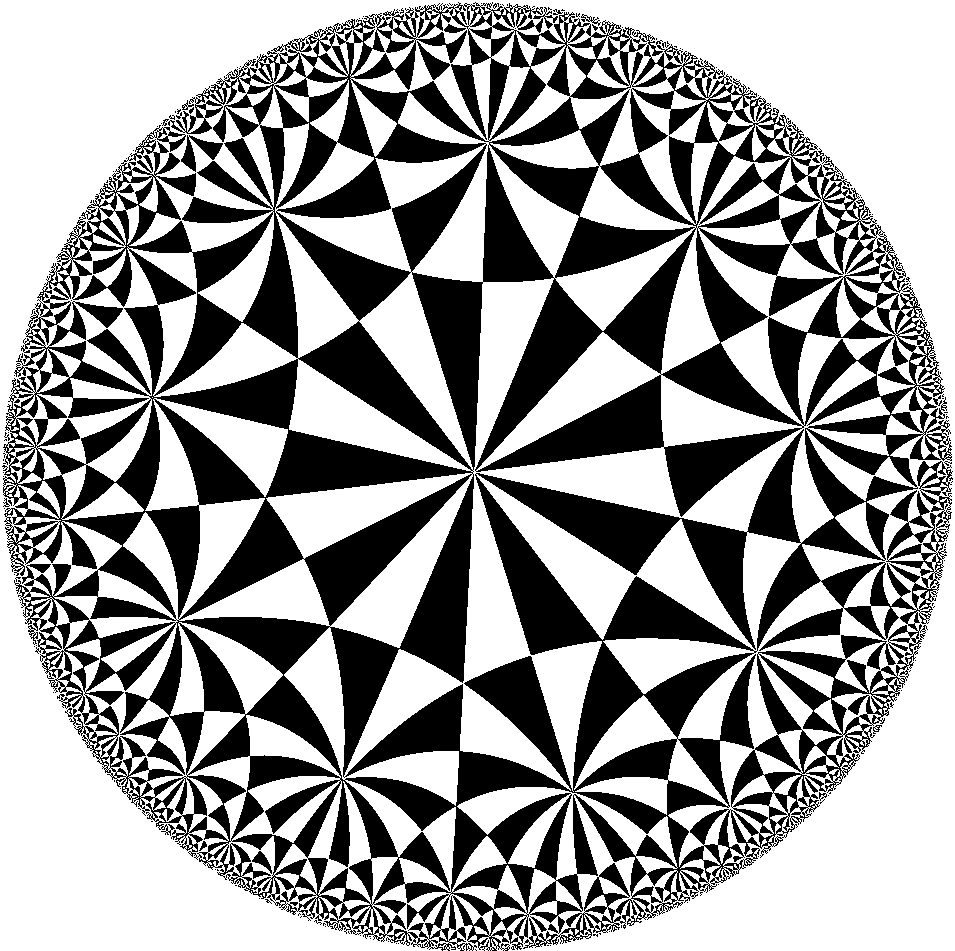}
\caption{Tiling of the hyperbolic plane with congruent triangles of angles $\frac{\pi}{2}, \frac{\pi}{3}$ and $\frac{\pi}{9}$, coming from \cite{Wiki_image}.}
\end{figure}

From such a tiling associated with a triangle group $T^*(p,q,r)$, one can naturally define an infinite simple connected undirected graph $\Gamma = \Gamma(p,q,r)$, called a triangle-tiling graph and that we will use as a host graph. We explain here how to define $\Gamma$.

Each triangle contains a point that is the center of its inscribed circle \parencite[Theorem 7.14.1]{Bea}. We consider these points. They form the set $V$ of vertices of $\Gamma$. The graph structure of $\Gamma$ is defined as follows: two vertices $v_1, v_2 \in V$ are joined by an edge $\{v_1, v_2\}$ if and only if they belong to two adjacent triangles.
\medskip

It is then obvious that $\Gamma = (V, E)$ is an infinite, $3$-regular graph. 
\medskip

We can see $\Gamma$ as a metric space when endowed with the path metric: each edge is of length $1$, the distance between two vertices $v_1, v_2 \in V$ is the minimal number of edges we have to cross to go from $v_1$ to $v_2$.

Because of its links with $\H^2$, it is clear that $\Gamma$ has an exponential growth rate. Hence, as said in Sect. \ref{section : 1}, $\Gamma$ does not enter the class of graphs concerned by Theorem \ref{corollaire : sigma_1 HH}, \ref{corollaire : sigma_1 HP} and~\ref{thm : moi}.
Moreover, $\Gamma$ has cycles, therefore it is not a tree. Hence, it does not enter the class of graphs of Theorem  \ref{thm : HH trees} either.
\medskip

We recall that, given a connected locally finite graph $X$ and any vertex $v$ of $X$, the number of ends of $X$ is $\lim_{n \to \infty} \|X \backslash B(v,n)\|$, where $B(v,n)$ is the ball centered at $v$ with radius $n$ and $\|X \backslash B(v,n)\|$ is the number of infinite connected component of $X \backslash B(v,n)$. It is well known that two roughly isometric graphs have the same number of ends, see  \parencite[Prop. 8.2.8]{Loh}. It is obvious that $\Gamma$ has $1$ end while a Cayley graph of a free group have infinitely many. Therefore, as said before, the structure of $\Gamma$ is completely different from the graphs concerned by Theorem \ref{thm : HH trees} and this difference will be felt in the way we solve the problem.

\begin{defn} \label{def : quasi isom}
A rough isometry between two metric spaces $(X, d_X)$ and $(Y, d_Y)$ is a map $\phi : X \longrightarrow Y$ such that there exist constants $C_1 >1, C_2, \; C_3 >0$ satisfying 
\begin{align*}
C_{1}^{-1} \cdot d_X(x_1,x_2) -C_{2} \leq d_Y(\phi(x_1), \phi(x_2)) \leq C_{1} \cdot d_X(x_1,x_2) + C_{2}
\end{align*}
for all $x_1,x_2 \in X$ and satisfying
\begin{align*}
\bigcup_{x \in X} B(\phi(x), C_{3}) = Y.
\end{align*}
If there is such a map, we say that $X$ is roughly isometric to $Y$.
\end{defn}

\begin{prop}
The host graph $\Gamma$ constructed above is roughly isometric to $(\H^2, g)$, with constants that depend on the value of $p, q, r$.
\end{prop}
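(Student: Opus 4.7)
The plan is to take $\phi:V\to\H^2$ to be the natural embedding sending each vertex of $\Gamma$ to the incenter of its tile. Write $D$ for the common hyperbolic diameter of a tile and $A_0=\pi-\pi/p-\pi/q-\pi/r$ for its common hyperbolic area; both are finite and positive (the second by Gauss--Bonnet, since all tiles are congruent). Coarse density is immediate: every $x\in\H^2$ lies in some tile and is therefore within distance $D$ of an incenter, so one may take $C_3=D$. For the upper Lipschitz bound, if $v\sim w$ in $\Gamma$ the tiles $T_v$ and $T_w$ share an edge, any point of that edge is at distance at most $D$ from each of the two incenters, hence $d_g(\phi(v),\phi(w))\le 2D$; concatenating along a shortest path in $\Gamma$ yields $d_g(\phi(v),\phi(w))\le 2D\cdot d_\Gamma(v,w)$ for arbitrary $v,w\in V$.

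The main step is the lower bound. Given $v,w\in V$, set $L=d_g(\phi(v),\phi(w))$, sample points $\phi(v)=p_0,p_1,\ldots,p_n=\phi(w)$ along the hyperbolic geodesic with $d_g(p_i,p_{i+1})\le D$ and $n\le L/D+1$, and use coarse density to pick incenters $v_i\in V$ with $v_0=v$, $v_n=w$ and $d_g(\phi(v_i),p_i)\le D$, so that $d_g(\phi(v_i),\phi(v_{i+1}))\le 3D$. The crux is to bound $d_\Gamma(v_i,v_{i+1})$ by a universal constant. Both tiles $T_{v_i}$ and $T_{v_{i+1}}$ lie inside $B_g(p_i,2D)$, and every tile meeting $B_g(p_i,3D)$ is contained in $B_g(p_i,4D)$; since the tile interiors are disjoint and each has area $A_0$, the number of such tiles is at most
\begin{equation*}
M:=\frac{2\pi(\cosh(4D)-1)}{A_0},
\end{equation*}
a constant depending only on $p,q,r$. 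Now join $\phi(v_i)$ to $\phi(v_{i+1})$ by the broken geodesic $\phi(v_i)\to p_i\to\phi(v_{i+1})$, which stays inside $B_g(p_i,D)$, and perturb it slightly to be transverse to the $1$-skeleton of the tessellation; the resulting path crosses tessellation edges only, producing a chain of edge-adjacent tiles linking $T_{v_i}$ to $T_{v_{i+1}}$ all of which meet $B_g(p_i,3D)$, and hence of length at most $M-1$. Thus $d_\Gamma(v_i,v_{i+1})\le M-1$, and summing yields
\begin{equation*}
d_\Gamma(v,w)\le n(M-1)\le (M-1)(L/D+1),
\end{equation*}
which rearranges into $d_g(\phi(v),\phi(w))\ge \frac{D}{M-1}\,d_\Gamma(v,w)-D$.

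Taking $C_1>1$ larger than $\max\{2D,(M-1)/D\}$ and $C_2=D$ then ensures both rough-isometry inequalities of Definition \ref{def : quasi isom}, with constants depending only on $p,q,r$. The main obstacle is the lower Lipschitz bound; it requires translating hyperbolic proximity into graph proximity, and this is precisely where the uniform local finiteness of the tessellation intervenes, via the Gauss--Bonnet constancy of the tile area $A_0$. A more conceptual route, available if one wishes to be less explicit, would be the Milnor--Švarc lemma: $T^*(p,q,r)$ acts properly discontinuously and cocompactly on $\H^2$ by isometries, and $\Gamma$ is canonically identified with its Cayley graph on the generating set $\{P,Q,R\}$ (two tiles are edge-adjacent iff one is the image of the other under a generator), so the orbit map $g\mapsto g\cdot\phi(v_0)$ is automatically a rough isometry onto $\H^2$.
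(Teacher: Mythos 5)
Your proof is correct. It uses the same map as the paper --- the canonical inclusion sending each vertex to the incenter of its tile --- so the approach coincides in outline; but the paper's proof is a single sentence (``take the constants as the triangle's diameter''), which really only accounts for the upper Lipschitz bound and the density condition $C_3=D$. The substance of your argument is the lower bound $d_g(\phi(v),\phi(w)) \ge \frac{D}{M-1}\, d_\Gamma(v,w) - D$, which you obtain by subdividing the connecting geodesic into steps of length at most $D$ and bounding the graph distance between nearby incenters by the number $M$ of tiles meeting a ball of radius $3D$, itself controlled by comparing the area $2\pi(\cosh(4D)-1)$ of a ball of radius $4D$ with the Gauss--Bonnet tile area $\pi-\pi/p-\pi/q-\pi/r$. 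This volume-counting step is exactly what the paper leaves implicit, and it is where the dependence of the constants on $p,q,r$ genuinely enters: the diameter alone does not yield this direction. Your closing observation that the statement also follows from the Milnor--\v{S}varc lemma applied to the proper cocompact action of $T^*(p,q,r)$ on $\H^2$, with $\Gamma$ identified with the Cayley graph on $\{P,Q,R\}$, is a clean conceptual alternative that the paper does not mention.
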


\begin{proof}
Take $\phi : \Gamma \longrightarrow \H^2$ as the canonical inclusion and take the constants as the triangle's diameter.

\end{proof}

\medskip

\section{Construction of the domain \texorpdfstring{$N$}{n, sigma}} \label{section : 3}
We consider a finite subset of vertices $\Omega \subset V$, connected for $\Gamma$, giving birth to a subgraph with boundary $\Omega$ as in Definition \ref{def : subgraph}. We recall that each vertex is  the center of a triangle of the tiling and that all triangles are isometric.
\medskip

This section aims to detail a method allowing us to associate a smooth bounded domain $N$ to the subgraph  $\Omega$. The relevance of the domain $N$ lies within its structural links with the subgraph $\Omega$: we will transcribe the structure of $\Omega$ onto $N$. 
\medskip

Before starting, we want to give an overview of the problems that could happen and that we will avoid. 
\medskip

The structural information of $\Omega$ is of two types: the neighborhood structure and the interior/boundary structure. Hence, we have to make sure that the domain of $\H^2$ we will associate to $\Omega$ is able to reflect these two pieces of information. 

In other words, for two $v_1, v_2 \in \bar{\Omega}$, we want $v_1$ to be near $v_2$ in $\Omega$ if and only if $v_1$ is near $v_2$ in the domain. Moreover, for $v \in B$, we want to guarantee the existence of a part of $\Sigma$ near $v$. Reciprocally, for each $x \in \Sigma$, we want to guarantee the existence of a vertex $v \in B$ near $x$. The sense of the word \textit{near} is the following: the proximity between $x$ and $v$ does not depend on the subgraph $\Omega$. This proximity shall be quantified by Proposition \ref{prop : rough isom}.

As already spotted by Han and Hua in \cite{HH}, one of the difficulties comes from the isolated boundary vertices. If $v \in B$ is isolated, we have to be clever to make sure there is $x \in \Sigma$ which is near $v$, see \Cref{expl : sommet isole}.

\medskip

A second difficulty is the following: we want the domain $N$ to be smooth.  This will give us the possibility to make a change of metric on $N$, in order to use Theorem $3$ of \cite{CGR}.
\medskip

Hence the process contains two steps: at first we find a domain $\tilde{N}$ which is structurally related to $\Omega$ but whose boundary $\tilde{\Sigma}$ is not smooth, and secondly we change this domain slightly by smoothing the angles in order to get the wanted domain $N$.

\subsection{Construction of the domain \texorpdfstring{ $\tilde{N}$}{n prime sigma}} \label{subsection : domaine rugueux}

Let us begin by considering a vertex $v \in\bar{\Omega}$ and the triangle $T_v$ associated. In this section, $v$ will always refer to this particular triangle. We call $A_1, A_2, A_3$ the vertices of $T_v$, respectively at angles $\frac{\pi}{p}, \frac{\pi}{q}, \frac{\pi}{r}$. We define a  map $H : \{A_1, A_2, A_3\} \longrightarrow \H^2$
%, that we call a \textit{homothety with ratio $\frac{9}{10}$ of $T_v$}, 
as follows:

$H(A_1)$ is the unique point of the geodesic segment $[v, A_1]$ such that $d_g(v, H(A_1))=\frac{9}{10} \cdot d_g(v, A_1)$. The points $H(A_2)$ and $H(A_3)$ are defined similarly. 
\medskip

We then connect $H(A_1), H(A_2)$ and $H(A_3)$ with geodesic segments. This gives birth to a new triangle, denoted $T_v'$.
%which can be seen as a kind of homothety of $T_v$.  
By convexity, $T_v'$ is strictly contained inside the initial triangle $T_v$. It is also easy to see that $v$ is contained inside $T_v'$. 
\medskip  

If $w \in \bar{\Omega}$ is another vertex of the subgraph, then by construction there is a triangle $T_w$ of the tiling associated to $w$ and there is an isometry $\psi_{v,w} : \H^2 \longrightarrow \H^2$ such that $\psi_{v,w}(T_v)=T_w$. This isometry is not necessary unique. If there are several, we just pick one and call it $\psi_{v,w}$. We consider this isometry and call $T_w' := \psi_{v, w}(T_v')$.

We apply this process to each vertex of $\bar{\Omega}$. Hence we have now at our disposal $|\bar{\Omega}|$ new triangles, disjoint from each other and isometric to each other.

%If we call $z$ a vertex of a triangle $T_1$ at angle $\frac{\pi}{p}$, then $z$ is the intersection point of $2p$ triangles $T_1, \ldots, T_{2p}$. If we call $v_1, \ldots, v_{2p}$  respectively the centers of $T_1, \ldots, T_{2p}$, then $z$ has an image, denoted $z_1$, through the homothety centered at $v_1$, $\ldots$, and a $2p^{\mbox{th}}$ image, denoted $z_{2p}$, through the homothety centered at $v_{2p}$, see figure \ref{fig : homothety}.
%\medskip

%Of course the same kind of observation holds for a vertex $z$ at angle $\frac{\pi}{q}$ or $\frac{\pi}{r}$, up to the correction that there are $2q$ or $2r$ images of $z$ respectively. 

%\begin{figure}[H]
%\centering
%\includegraphics[scale=1]{}
%\caption{The point $z$ represent the intersection of the sides of the initial %triangles $T_1, \ldots, T_{2p}$, for the value $p=2$.}
%\label{fig : homothety}
%\end{figure}

%Among all these new triangles, we retain only those whose center $v$ is part of the subgraph $\Omega$, that is $v \in \bar{\Omega}$.

%We now have at our disposal $|\bar{\Omega}|$ disjoint and isometric triangles, which we will reconnect according to the structure of $\Omega$.
%\medskip
\medskip

If $v_1, v_2 \in \bar{\Omega}$ are such that $v_1 \sim v_2$ in the subgraph, then by definition of $\Gamma$, $v_1$ and $v_2$ represent the centers of two triangles, let us say $T_1$ and $T_2$, having one side in common. 
Thus $T_1$ has two vertices $x, y$ which are also vertices of the triangle $T_2$. 
As we said before, there is an isometry $\psi_{v, v_1}$ of $\H^2$ such that $\psi_{v, v_1}(T_v)=T_1$. Without loss of generality, say that $\psi_{v, v_1}(A_1)=x$ and $\psi_{v, v_1}(A_2)=y$. 
\medskip

We denote $x_1: =\psi_{v, v_1}(H(A_1))$ and $y_1 := \psi_{v, v_1}(H(A_2))$, which are vertices of the triangle $T_1' = \psi_{v, v_1}(T_v')$. Similarly, we denote $x_2 :=  \psi_{v, v_2}(H(A_1))$ and $y_2 := \psi_{v, v_2}(H(A_2))$ which are vertices of the triangle $T_2' = \psi_{v, v_2}(T_v')$.
\medskip

We then connect $x_1$ to $x_2$ by a geodesic segment, and we do the same with $y_1$ and $y_2$, see Fig. \ref{fig : rectangle}.
\medskip

We write  $T_1' \sim T_2'$ in order to say that we have connected the triangles $T_1'$ and $T_2'$.
\medskip

This process connecting the triangles according to the structure of $\Omega$ allows us to notice the following relation: for two vertices $v_1, v_2 \in \bar{\Omega}$ which are the centers of two triangles $T_1', T_2'$, we have
\begin{align*}
v_1 \sim v_2 \iff T_1' \sim T_2'.
\end{align*}

\begin{figure}[H] 
\centering
\includegraphics[scale=1]{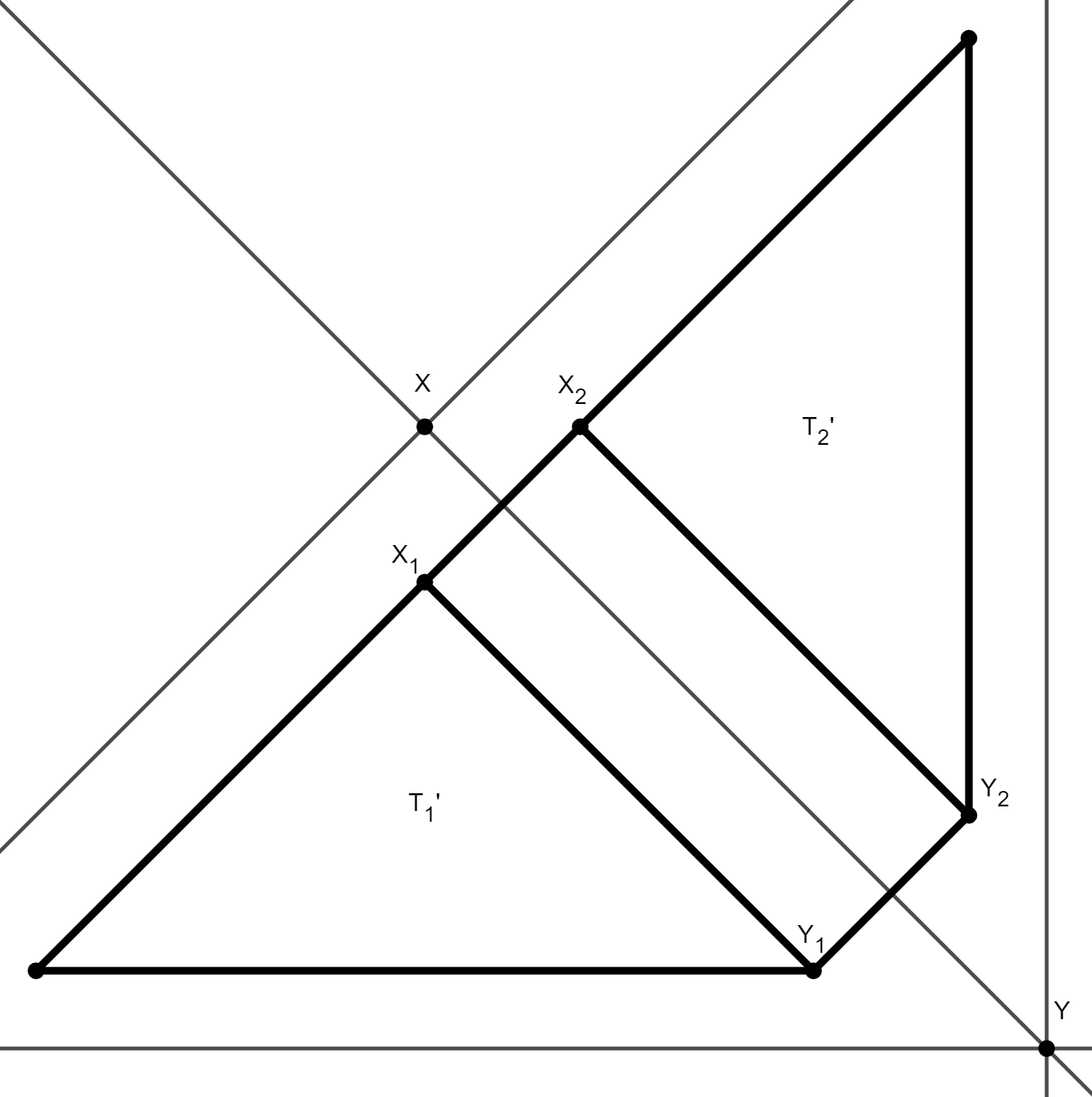}
\caption{The vertices $x_1, y_1$ of $T_1'$ are connected respectively to the vertices $x_2, y_2$ of $T_2'$ because of the assumption that  $v_1 \sim v_2$ in the subraph $\Omega$.}
\label{fig : rectangle}
\end{figure}

%With the same notations, if $v_1, \ldots, v_{2p} \in \bar{\Omega}$ are such that $v_1 \sim v_2, v_2 \sim v_3, \ldots, v_{2p} \sim v_1$, then the triangles $T_1, \ldots, T_{2p}$ associated intersect in one point $z$. Each of the $2p$ homotheties to be considered generates an image of $z$, denoted as usual by $z_1, \ldots, z_{2p}$. By applying the process described above, we connect $z_1$ to $z_2$, $z_2$ to $z_3, \ldots, z_{2p}$ to $z_1$ by geodesic segments, see figure \ref{fig : triangle}.

Let us suppose that $z$ is the common vertex of $2p$ triangles such that their centers $v_1, \ldots, v_{2p}$ satisfy $v_1 \sim v_2 \sim v_3 \sim \ldots \sim v_{2p} \sim v_1$ in $\Omega$. Without loss of generality, let us say that $\psi_{v, v_1}(A_1)=z$. We denote $z_1=\psi_{v,v_1}(H(A_1)), \ldots, z_{2p}=\psi_{v, v_{2p}}(H(A_1))$ as before.  By applying the process described above, we connect $z_1$ to $z_2$, $z_2$ to $z_3, \ldots, z_{2p}$ to $z_1$ by geodesic segments, see Fig. \ref{fig : triangle}.

Of course, there is nothing specific about $p$ and the same holds for $q$ and $r$.

\begin{figure}[H]
\centering
\includegraphics[scale=1]{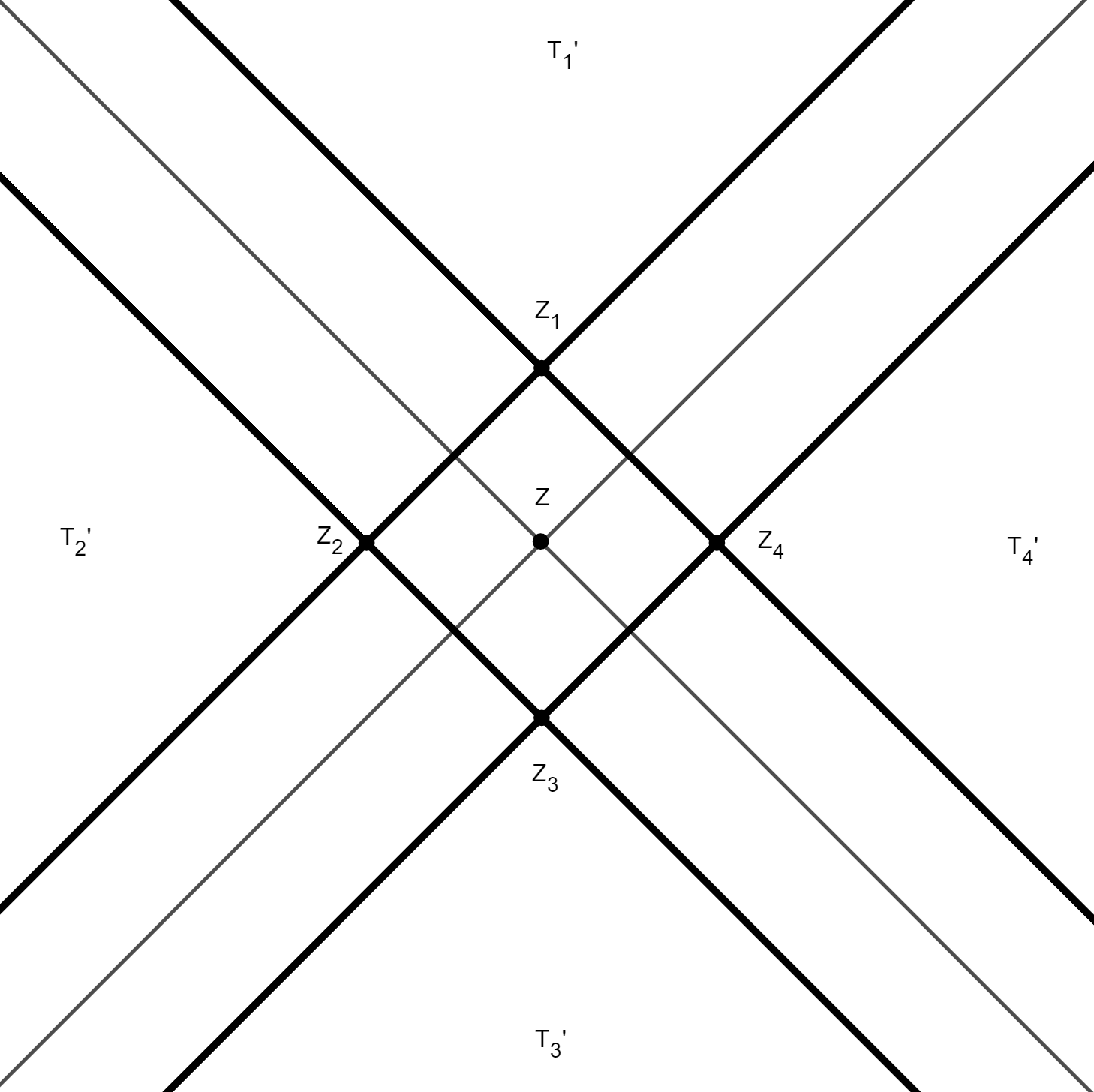}
\caption{We connected $z_1$ to $z_2$, $z_2$ to $z_3$, $z_3$ to $z_4$ and $z_4$ to $z_1$ because of the assumption that $v_1 \sim v_2 \sim v_3 \sim v_4 \sim v_1$ in $\Omega$.}
\label{fig : triangle}
\end{figure}

\begin{rem}
The previous construction naturally generates different \textit{simple polygons} contained inside the hyperbolic plane $\H^2$, of which the exhaustive list is the following:
\begin{itemize}
\item Each vertex $w \in \bar{\Omega}$ adds one triangle $T_w'$;
\item Each couple of vertices $v_1, v_2 \in \bar{\Omega}$ such that $v_1 \sim v_2$ adds one quadrilateral;
\item Each vertex $z$ which is the common vertex of $2p$ triangles such that their centers $v_1, \ldots, v_{2p}$ satisfy $v_1 \sim v_2 \sim v_3 \sim \ldots \sim v_{2p} \sim v_1$ in $\Omega$  adds one $2p$-gon;
\item Each vertex $z$ which is the common vertex of $2q$ triangles such that their centers $v_1, \ldots, v_{2q}$ satisfy $v_1 \sim v_2 \sim v_3 \sim \ldots \sim v_{2q} \sim v_1$ in $\Omega$   adds one $2q$-gon;
\item Each vertex $z$ which is the common vertex of $2r$ triangles such that their centers $v_1, \ldots, v_{2r}$ satisfy $v_1 \sim v_2 \sim v_3 \sim \ldots \sim v_{2r} \sim v_1$ in $\Omega$   adds one $2r$-gon.
\end{itemize}
\end{rem}

\begin{defn}
We call $K$ the compact subset of $\H^2$ obtained by considering the closure of the union of all the simple polygons generated by the previous construction.
We also call $\hat{N}$ the bounded domain of $\H^2$ defined by $\hat{N}= \stackrel{\circ}{K}$ and we call $\hat{\Sigma}$ the boundary of $\hat{N}$.
\end{defn}

\begin{figure}[H] 
\centering
\includegraphics[scale=0.5, angle=0]{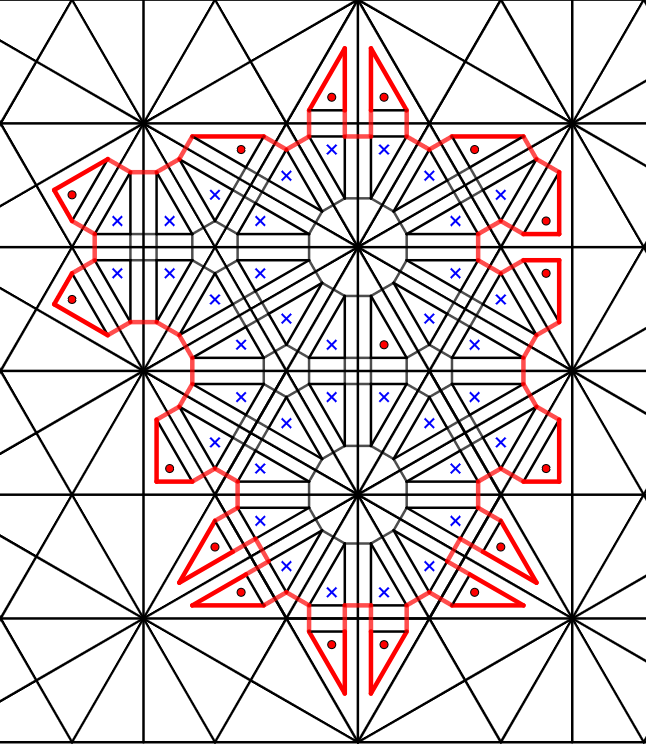}
\caption{The  crosses represent the interior $\Omega$ of the subgraph, the dots represent the boundary $B$ of the subgraph. The polygonal curve  represent the boundary $\hat{\Sigma}$ while the polygon (of which $\hat{\Sigma}$ is the boundary) is the interior $\hat{N}$.}
\label{fig : domaine hat N}
\end{figure}

As this point of the paper, one may ask why we do not simply define the domain as the thickening of the union of all $T_w$, for $w \in \bar{\Omega}$. The reason is that by doing so, the domain would not be able the reflect the neighborhood structure of the subgraph.

Indeed, we recall that by definition of a subgraph, two boundary vertices are never connected by an edge. Let us consider two vertices $w_1, w_2 \in B$ such that $T_{w_1}$ is adjacent to $T_{w_2}$ (meaning that $\{w_1,w_2\} \in E$). Gluing the two triangles $T_{w_1}, T_{w_2}$ would give the information that $w_1$ is adjacent to $w_2$ in the subgraph, which is not the case  because they are two boundary vertices.

This mismatch between the structure of the domain and the structure of the subgraph would then jeopardize one of our next constructions, namely the rough isometry of \Cref{prop : rough isom}. This proposition claims the existence of a rough isometry whose constants do not depend on the subraph $\Omega$ chosen. In order to prove the existence of such a rough isometry, it is crucial that the domain $N$ we are building reflects the neighborhood structure of the subgraph $\Omega$. We give more details about this problem in Appendix \ref{appendix : contre expl}.

\begin{rem}
%The domain $\hat{N}$ is connected since the subgraph $\Omega$ is chosen connected. 
We recall that, by construction, the domain 
$\hat{N}$ has the same neighborhood structure as the subgraph $\Omega$. Indeed, we already saw that for $v_1, v_2 \in \bar{\Omega}$, 
\begin{align*}
v_1 \sim v_2 \iff T_1' \sim T_2'.
\end{align*}

However, the boundary structure of $\hat{N}$ is not analog to the one of $\Omega$. We already have one implication: for all $x \in \hat{\Sigma}$, there exists $w \in B$ such that $w$ is near  $x$.
% (in the sense of Proposition \ref{prop : rough isom}).

The reciprocal is not verified. If $w \in B$, there is no guarantee that there exists $x \in \hat{\Sigma}$ such that  $x$ is near $w$. To see that, one can look at Example \ref{expl : sommet isole}. 
%However, $\tilde{\Sigma}$ is not necessary connected and $\tilde{\Sigma}$ is not smooth. 
\end{rem}

\begin{expl} \label{expl : sommet isole}
Choose a vertex $v^*$ of the host graph and define $\Omega$ as the ball of radius $n$ deprived of $v^*$. This will give rise to a subgraph $\Omega$, for which $v^* \in B$. However, there is no $x \in \hat{\Sigma}$ near $v^*$. Indeed, the bigger $n$ is, the bigger the distance between $\hat{\Sigma}$ and $v^*$ is. Hence the proximity between $\hat{\Sigma}$ and $v^*$ depends on the subgraph, which we want to avoid. This kind of situation also appears on Fig. \ref{fig : domaine hat N}, where we can see an isolated boundary vertex.
\end{expl}

To remedy this problem, we proceed to a surgery of this domain $\hat{N}$: for each $w \in B$, we remove the ball centered at $w$ of radius $\frac{\rho}{2}$, where $\rho$ denotes the radius of the circle inscribed in $T_w'$, see Fig. \ref{fig : boules enlevees}

\begin{defn}
We call $\tilde{N}$ the domain obtain after the removal of the balls, and we call $\tilde{\Sigma}$ its boundary.
\end{defn}

\begin{rem}
This last surgery obviously gives us the reciprocal we lacked until now: for each $w \in B$, there exists $x \in \tilde{\Sigma}$ such that $x$ is near $w$.
\end{rem}

\begin{figure}[H] 
\centering
\includegraphics[scale=0.45, angle=0]{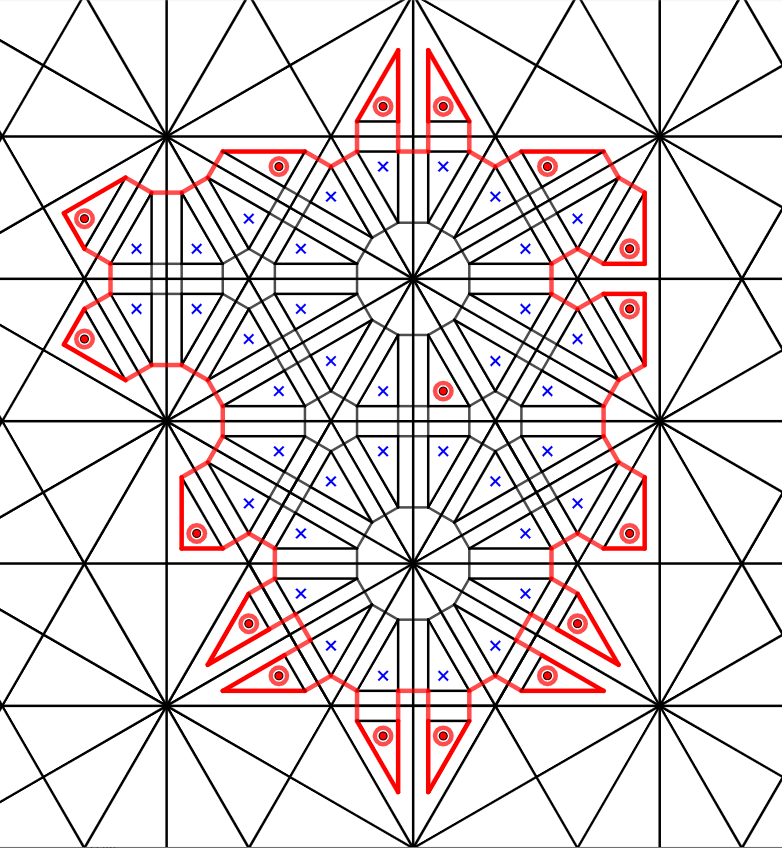}
\caption{The crosses represent the vertices of $\Omega$, the dots represent the boundary $B$. The balls surrounding the boundary vertices are removed from the domain and the structure of the subgraph is readable on the domain. }
\label{fig : boules enlevees}
\end{figure}

This bounded domain $\tilde{N}$ is not our final domain because we want one with a smooth boundary.

\subsection{Smoothing of the domain \texorpdfstring{ $\tilde{N}$}{n prime sigma prime} } \label{subsection : domaine lisse}

As we said in the introduction, we want to discretize the domain in order to find an upper bound for the Steklov spectrum of $\Omega$. 
One way to do this consists in using Theorem $3$ of \cite{CGR}.  Of course, we have to make sure the assumptions of this theorem are verified before using it. However, the domain $\tilde{N}$ does not satisfy all these assumptions, see Remark \ref{rem : cst discretisation}. This section is devoted to modify the domain $\tilde{N}$ and get a new domain $N$ which has the advantage to have a smooth boundary.

\medskip

Note that, as always in this paper, we have to make sure that the operations we make do not depend on the subgraph $\Omega$, but only on the host graph $\Gamma$. 
\medskip

We recall that $\tilde{\Sigma}$ is composed with the union of $\hat{\Sigma}$ and many circles. Each circle is already a smooth connected component of $\tilde{\Sigma}$, hence we only have to smooth $\hat{\Sigma}$ out. Each connected component of $\hat{\Sigma}$ is a simple closed $C^\infty$ piecewise curve, composed with geodesic segments. Note that by construction, there exist at most $4 \times 3 -3 = 9$ different segments (two isometric segments are identified). 
We shall  designate by \textit{corner} the intersection of two geodesic segments forming $\hat{\Sigma}$. A corner is therefore a point  of the curve whose neighborhoods are of class $C^0$, but not of class $C^1$. By construction, a corner is always located on a vertex of a triangle $T'$.
\medskip

%Let us notice that, from a break, there is only $4$ possible directions for the boundary (see Fig. \ref{fig : triangle}).

%As a consequence, by the regularity of the domain $\tilde{N}$, this implies the existence of at most
The regularity of our construction allows us to state that the domain $\tilde{N}$ has  at most 
$ \left(
\begin{array}{c}
4 \\
2
\end{array}
\right)\times 3 = 18
$ 
different internal angles (two congruent angles are identified).

The interest of these comments is to simplify considerably the smoothing of the domain $\tilde{N}$. Indeed, there are at most $18$ different types of angles to smooth out.
\medskip

Let us call $\lambda_1, \ldots, \lambda_9$ the length of the geodesic segments and let us denote
\begin{align*}
\lambda := \min\{\lambda_1, \ldots, \lambda_9\}.
\end{align*}

If $\tilde{\Sigma}$ has $n$ corners, let us call them $z_1, \ldots z_n$. For each corner $z_i$, there exist exactly two points $x_i, x_i' \in \tilde{\Sigma}$ such that 
\begin{align*}
d_g(x_i, z_i) = d_g(x_i',z_i) = \frac{\lambda}{10}.
\end{align*}

Let us consider a corner $z_i$ as well as the two points $x_i, x_i'$  associated.
\medskip

We then create a smooth curve 
\begin{align*}
\alpha_1 : [0, 1] \longrightarrow \H^2
\end{align*}
such that
\begin{itemize}
\item $\alpha_1(0) = x_i$, $\alpha_1(1)=x_i'$;
\item For all $t \in (0,1)$ we have $\alpha_1(t) \in \tilde{N}$;
\item For all $t \in [0,1]$ we have $d_g(\alpha_1(t), z_i) \leq \frac{\lambda}{10}$;
\item A curve whose image is 
\begin{align*}
[z_{i-1}, x_i] \cup \alpha_1([0,1]) \cup [x_i', z_{i+1}]
\end{align*}
is smooth, see Fig. \ref{fig : lissage coubre 1}.
\end{itemize}

\begin{figure}[H]
\centering
\includegraphics[scale=1.5]{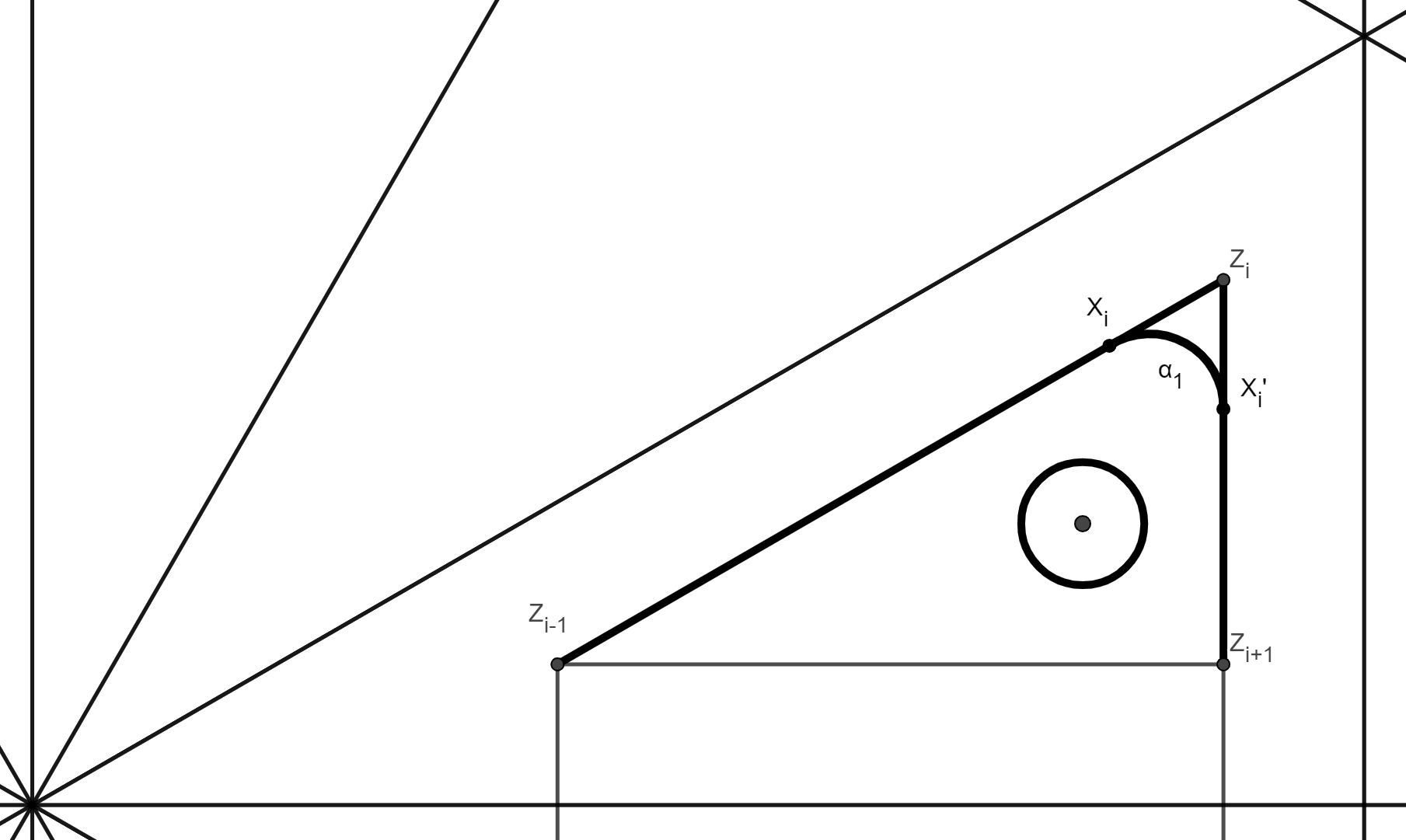}
\caption{The curve $\alpha_1$ can be seen as a smoothing of the angle at the corner $z_i$.}
\label{fig : lissage coubre 1}
\end{figure}

Then suppose that $z_i$ is a corner associated with an angle which is not congruent to the previous one. We then create a smooth curve
\begin{align*}
\alpha_2 : [0,1] \longrightarrow \H^2
\end{align*}
with the same four properties as the previous curve, see Fig. \ref{fig : lissage courbe 2}.
\begin{figure}[H]
\centering
\includegraphics[scale=2.0]{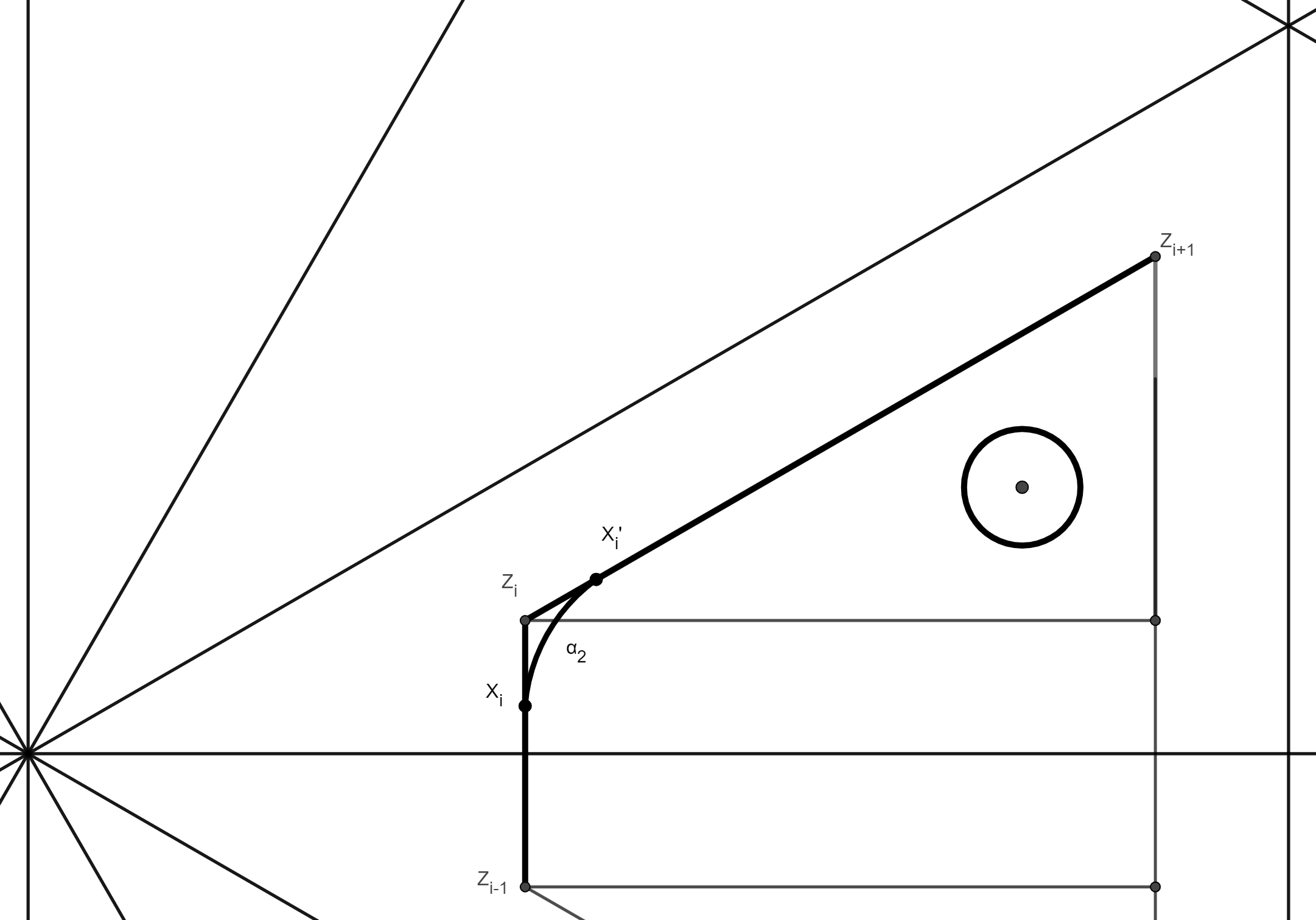}
\caption{The curve $\alpha_2$ is a smoothing of the angle at $z_i$.}
\label{fig : lissage courbe 2}
\end{figure}

We continue the process and create a smoothing curve for each type of angle, at most $18$ times as said before.

\begin{rem}
If $z_j$ is another corner of the same type as $z_i$, meaning that the angle at $z_j$ is congruent to the angle at $z_i$, there is then an isometry $\Psi : \H^2 \longrightarrow \H^2$ which sends the angle at $z_i$ onto the angle at $z_j$. The smoothing curve at angle $z_j$ is then given by $\Psi \circ \alpha_\mu$, where $\mu \in \{1,\ldots, 18\}$ depends on the nature of the angle.
\end{rem}

Thus, we smooth out the domain $\tilde{N}$ with these $18$ curves and obtain a new connected domain with smooth boundary.

We obtain the domain $N$ that we wanted, whose boundary is denoted $\Sigma$. By construction, the domain $N$ has the following characteristics:
\begin{itemize}
\item $N$ is connected;
\item The boundary $\Sigma$ is smooth;
\item $\Sigma$ is composed of at most $28$ types of curve:
\begin{itemize}
\item The $9$ geodesic segments (coming from triangles and quadrilaterals);
\item The $18$ smoothing curves $\alpha_1, \ldots, \alpha_{18}$;
\item The circles resulting from the removal of the balls.
\end{itemize}
\end{itemize}

Moreover, the domain $N$ is constructed in a way that the structure of the subgraph $\Omega$ is readable in it. Indeed, if we call \textit{smoothed triangle} a region of $N$ of the form $N \cap T_w'$, for $w \in \bar{\Omega}$, then
\begin{itemize}
\item A smoothed triangle $N \cap T_{v_1}'$ is connected to a neighbor $N \cap T_{v_2}'$ if and only if $v_1 \sim v_2$ in $\Omega$;
\item A vertex $w$ is part of  $B$ if and only if there exists $x \in \Sigma$ such that $x$ is near $w$. As said before, Proposition \ref{prop : rough isom} will clarify the sense of the word \textit{near}.
\end{itemize}

\begin{figure}[H]
\centering
\includegraphics[scale=0.35, angle=0]{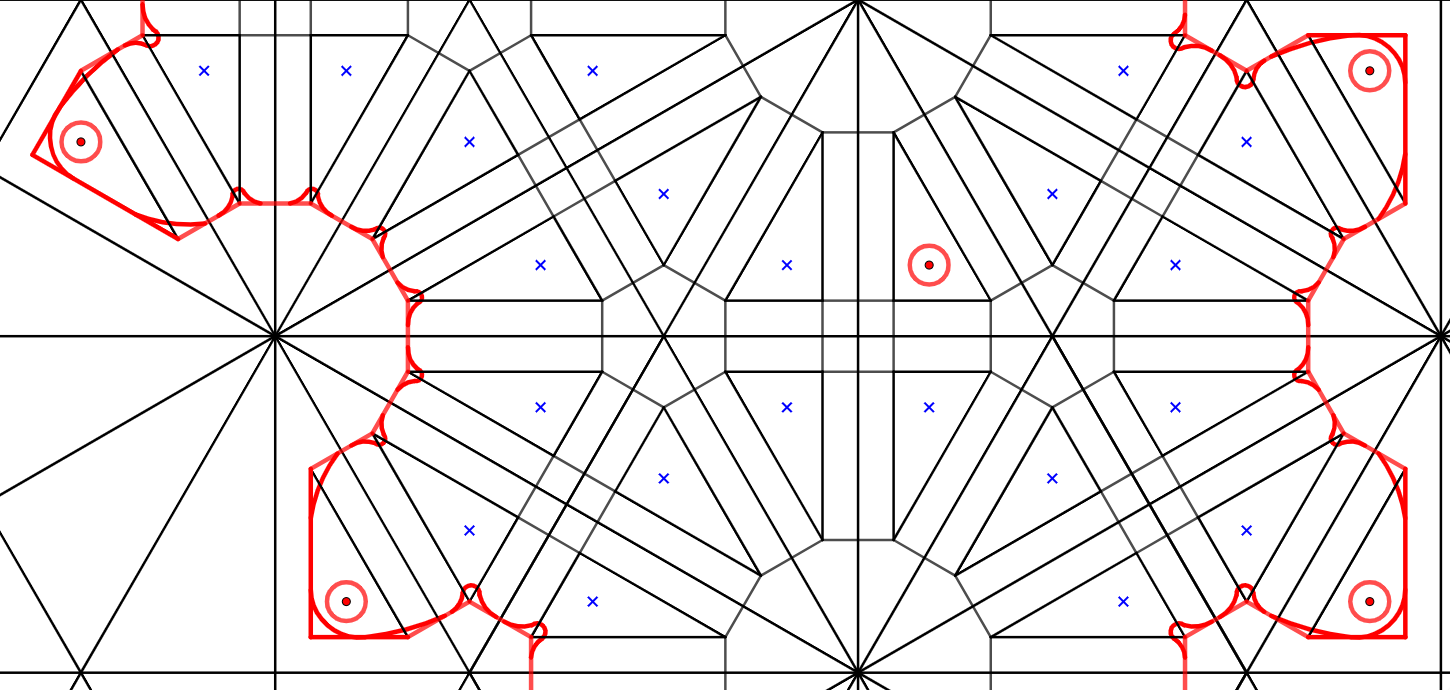}
\caption{The crosses represent the vertices of $\Omega$, the dots represent the boundary $B$. The balls surrounding the boundary vertices are removed from the domain, the structure of the subgraph is readable in the domain and $\Sigma$  is smooth.}
\end{figure}

\begin{rem}
Since each $w \in B$ adds one connected component of $\Sigma$ as a circle, we have the inequality 
\begin{align} \label{ineg : bord}
|\Sigma| \geq C_4 \cdot |B|,
\end{align}
where $C_4$ corresponds to the perimeter of a circle of radius $\frac{\rho}{2}$.
\end{rem}

\section{Proof of the main theorem} \label{section : 4}

Let us begin by recalling Theorem $1.2$ of \cite{CEG1}.
\begin{thm}
There exists a constant $C_5$ such that for all bounded domain $N$ of the hyperbolic space $\H^2$ and for all $k \geq 0$, 
\begin{align} \label{ineg : domaine}
\sigma_k(N, g) \leq C_5 \cdot \frac{k}{|\Sigma|}.
\end{align}
\end{thm}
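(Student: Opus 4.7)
The plan is to invoke the min-max characterization of Steklov eigenvalues: it is enough to exhibit $k+1$ test functions with pairwise disjoint supports whose Rayleigh quotients
$$\mathcal{R}(f) = \frac{\int_N |\nabla f|_g^2\, dv_g}{\int_\Sigma f^2\, ds_g}$$
are all at most $C_5 k/|\Sigma|$. Indeed the span of such a family contains a $k$-dimensional subspace of functions $L^2(\Sigma)$-orthogonal to the constants on which $\mathcal{R}$ is bounded by the same constant, and min-max then yields $\sigma_k(N,g) \leq C_5 k/|\Sigma|$.

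First I would produce a suitable decomposition. A Grigor'yan--Netrusov--Yau type covering lemma, applied to the boundary measure $ds_g$ regarded as a Borel measure on the metric space $(\H^2, d_g)$, yields $k+1$ pairwise disjoint annular regions $A_i = B_g(p_i, 2 r_i) \setminus B_g(p_i, r_i)$ together with their inner balls $B_g(p_i, r_i)$, such that
$$|\Sigma \cap B_g(p_i, r_i)| \geq c\,|\Sigma|/k$$
for a universal constant $c > 0$. What makes this available in $\H^2$ is its uniform weak doubling at every fixed scale; by choosing the working scales carefully one can ensure the radii $r_i$ lie in a range where the hyperbolic capacities of the pairs $(B_g(p_i, r_i), B_g(p_i, 2r_i))$ are uniformly bounded above by a constant $\kappa$ depending only on the ambient geometry.

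Second, to each annulus I attach the Lipschitz cut-off
$$f_i(x) := \min\bigl(1,\,\max(0,\,2 - d_g(x, p_i)/r_i)\bigr),$$
restricted to $N$; then $f_i \equiv 1$ on $B_g(p_i, r_i) \cap N$, vanishes outside $B_g(p_i, 2r_i) \cap N$, and has disjoint support from $f_j$ for $j \neq i$. The Rayleigh quotient of each $f_i$ satisfies
$$\mathcal{R}(f_i) \leq \frac{\kappa}{c|\Sigma|/k} = \frac{\kappa\,k}{c\,|\Sigma|},$$
yielding the conclusion with $C_5 = \kappa/c$. The technical heart of the argument lies in the first step: setting up the GNY-type decomposition so that the resulting annuli live at scales where the hyperbolic capacity is bounded, with constants depending only on the geometry of $\H^2$ and not on $N$ or $k$. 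The exponential volume growth of $\H^2$ imposes some care, since at large scales the capacity of a ratio-$2$ annulus degenerates; the multi-scale flexibility of the GNY lemma is what allows one to avoid such scales altogether.
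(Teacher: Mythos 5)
First, a remark on what you are being compared against: the paper does not prove this statement at all. It is quoted as Theorem $1.2$ of \cite{CEG1} and used as a black box, so the only ``proof'' in the paper is the citation. Your sketch should therefore be measured against the argument of \cite{CEG1} itself, whose overall skeleton --- min-max applied to $k+1$ disjointly supported test functions produced by a Grigor'yan--Netrusov--Yau decomposition of the boundary measure, with Rayleigh quotients controlled by capacities of ratio-$2$ annuli --- you have reproduced correctly.

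The gap is in your first step, and it is not a technicality. You claim that ``the multi-scale flexibility of the GNY lemma is what allows one to avoid'' the scales at which the hyperbolic capacity of a ratio-$2$ annulus degenerates. This cannot be done. Take $N = B_R$ a geodesic ball of large radius $R$ and $k=1$: you need two disjoint annuli each carrying a fixed fraction of $|\Sigma| = 2\pi\sinh R$. A fixed angular fraction of the circle $\partial B_R$ has hyperbolic diameter roughly $2R$, so any metric annulus containing it must have outer radius of order $R$, while $\mathrm{cap}\bigl(B(p,r), B(p,2r)\bigr)$ in $\H^2$ grows like $e^{r}$. Even estimating the energy of your cut-off more generously by $r^{-2}\,|B(p,2r)\cap N|$ one only reaches a bound of order $k/(\log|\Sigma|)^2$, far from $k/|\Sigma|$. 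The missing idea in \cite{CEG1} is conformal: in the Poincar\'e disk model $g = \rho^2 g_0$ with $\rho = 2/(1-|x|^2) \ge 2$, the Dirichlet energy $\int_N |\nabla f|^2\,dv$ is conformally invariant in dimension $2$, while $\int_\Sigma f^2\,ds_g = \int_\Sigma f^2 \rho\, ds_0 \ge \int_\Sigma f^2\,ds_0$. One therefore runs the GNY decomposition in the Euclidean unit disk (which satisfies the covering property at all scales) for the measure $\rho\,ds_0|_\Sigma$ of total mass $|\Sigma|_g$; the resulting Euclidean ratio-$2$ annuli have capacity bounded by the scale-invariant planar constant, and the bound $C_5\, k/|\Sigma|$ then follows exactly as in your second step. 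Without this transplantation to the conformal Euclidean model (or some substitute for it), your argument does not close.
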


Actually, the result of Colbois et al. is more general than that, but this statement is enough for our needs.
\medskip

The domain $N$ being structurally similar to the subgraph $\Omega$, we will show that a bound of the same type exists for the subgraph's spectrum. The goal of this section is to transfer this result to the subgraph.
\medskip

To do this, we want to discretize the domain $N$. Let us recall the conditions that the domain must satisfy to be discretized:

We have to assume the existence of constants $\kappa >0$ and $r_0 \in (0,1)$ such that 
\begin{itemize}
\item The boundary $\Sigma$ admits a neighborhood which is isometric to the cylinder $[0,1] \times \Sigma$, whose boundary corresponds to   $\{0\}  \times \Sigma$;
\item The Ricci curvature of $N$ is bounded below by $- \kappa$;
\item The Ricci curvature of $\Sigma$ is bounded below by $0$;%(trivial car $\Sigma$ est une courbe (?));
\item For all $x \in N$ such that $d_g(x, \Sigma) >1$, we have inj$_M(x) >r_0$;
\item For all $x \in \Sigma$, we have inj$_\Sigma(x) >r_0$.
\end{itemize}

For further investigation on this topic and to understand why these assumptions are made, one can look at \cite{CGR}.

\begin{rem} \label{rem : cst discretisation}
The last four conditions are trivially satisfied by $N$. Moreover, the constants $\kappa, r_0$ do not depend on the subgraph $\Omega$. Indeed, the regularity of the construction of the domain $N$ allows to give constants $\kappa, r_0$ valid for any domain $N$ obtained by the process described above.

In other words, if we call $\mathcal{M} = \mathcal{M}(\kappa, r_0)$ the class of $2$-dimensional manifolds which satisfy the last four properties, then $N \in \mathcal{M}$ whatever the chosen subgraph $\Omega$.
\end{rem}

On the other hand, the first assumption is not satisfied by the domain. Indeed, $\Sigma$ does not have a neighborhood isometric to a cylinder. To remedy this, we will proceed to a change of metric on $N$ in order to obtain a new Riemannian manifold which satisfies the five properties.

\subsection{Changing the metric on the domain} \label{subsection : chmt metric}

The main difficulty of this subsection is proceeding to a change of metric which is uniform for all domains $N$ obtained by the procedure described in Sect. \ref{section : 3}. 

Here, the word \textit{uniform} reflects the existence of a constant $C_6$ as in Proposition \ref{prop : quasi_isom_continue} which is valid for all domains.
%as pointed out in Remark \ref{rem : const uniforme}.
\medskip

Let us denote 
\begin{align*}
N(\delta) = \{x \in N \; : \; d_g(x, \Sigma) \leq \delta \}
\end{align*}
the $\delta$-neighborhood of the boundary.

\begin{prop}[Lemma $34$ of \cite{CGR}]
There exist on $N$ a $\delta >0$ (depending only on the 28 types of curves) and a Riemannian metric $g'$ such that 
\begin{itemize}
\item $(N(\delta), g')$ is isometric to $[0,1] \times \Sigma$;
\item The metrics $g$ and $g'$ are homothetic on $N \backslash N(3\delta)$.
\end{itemize}
\end{prop}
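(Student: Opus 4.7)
The plan is to construct $g'$ by modifying $g$ only in a collar neighborhood of $\Sigma$ of uniformly small width, using Fermi coordinates to write the old metric explicitly and a smooth cutoff to blend a product metric near $\Sigma$ with $g$ itself on the interior.

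First, I would exhibit a tubular neighborhood of $\Sigma$ in $N$ whose width depends only on the $28$ types of curves. Each boundary arc is obtained by applying some hyperbolic isometry to one of the $9$ geodesic segments, one of the $18$ smoothing arcs $\alpha_\mu$, or a circle of radius $\tfrac{\rho}{2}$; in particular the second fundamental form of $\Sigma$ is uniformly bounded, and the construction of $N$ in Sect.~\ref{section : 3} guarantees that distinct boundary components are separated by a distance bounded below by a constant depending only on the triangle group. The normal exponential map $\Phi:[0,\delta_0]\times\Sigma\to N(\delta_0)$, $\Phi(t,x)=\exp_{x}(t\,\nu_x)$, is therefore a diffeomorphism for some $\delta_0>0$ depending only on these data; set $\delta:=\delta_0/4$.

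Second, in Fermi coordinates $(t,x)$ on $N(\delta_0)$ the metric $g$ takes the form $g=dt^2+h_t$, where $h_t$ is a smooth family of metrics on $\Sigma$ with $h_0$ the metric induced by $g$. On the inner collar $N(\delta)$ I would define $g':=\delta^{-2}\,dt^2+h_0$. Pulling back via the rescaling $(u,x)\mapsto\Phi(\delta u,x)$ then exhibits an isometry $(N(\delta),g')\cong\bigl([0,1]\times\Sigma,\,du^2+h_0\bigr)$, which is the first required property.

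Third, I would extend $g'$ smoothly to all of $N$. Pick a cutoff $\chi:\R\to[0,1]$ with $\chi\equiv 1$ on $(-\infty,\delta]$ and $\chi\equiv 0$ on $[3\delta,\infty)$. On the annulus $N(3\delta)\setminus N(\delta)$ define
\[ g' \;:=\; \chi(t)\bigl(\delta^{-2}\,dt^2+h_0\bigr) + \bigl(1-\chi(t)\bigr)\,g, \]
and set $g':=g$ on $N\setminus N(3\delta)$. Since the space of positive-definite symmetric bilinear forms is a convex cone, the interpolant is a smooth Riemannian metric, so $g'$ extends smoothly to all of $N$; the second bullet holds on the nose with homothety ratio $1$ (any other positive constant would work equally well).

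The main obstacle is the uniformity of $\delta$ across the whole family of domains $N$: one has to check that the tubular radius and the cutoff scale can be chosen independently of the subgraph $\Omega$ producing $N$. This is where the rigidity of the triangle tiling is crucial — every geometric input to the tubular neighborhood construction (second fundamental form of $\Sigma$, injectivity radius of $\Sigma$ in $N$, minimal distance between distinct boundary components) is controlled by the finite library of $28$ curve types together with the combinatorial structure described in Sect.~\ref{section : 3}, so the resulting $\delta$ indeed depends only on $\Gamma$.
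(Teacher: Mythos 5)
Your construction is correct and follows essentially the same route as the paper: Fermi coordinates near $\Sigma$, a product metric on the inner collar, a smooth cutoff interpolation on $N(3\delta)\setminus N(\delta)$, and uniformity of $\delta$ coming from the finite library of boundary curve types. The only (cosmetic) difference is the normalization: the paper rescales the entire interpolated metric by $\frac{1}{\delta^2}$, so that $g'=\frac{1}{\delta^2}g$ on $N\setminus N(3\delta)$ (homothety ratio $\frac{1}{\delta^2}$), whereas you stretch only the normal direction inside the collar and keep $g'=g$ outside (homothety ratio $1$); both choices satisfy the two stated conclusions.
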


\begin{proof}
We will use the Fermi parallel coordinates: we parametrize each connected component of $\Sigma$ by arc-length and call $s$ the parameter. We then use the distance $t$ to $\Sigma$ as a second parameter to describe the points of $N$ lying in a close neighborhood of $\Sigma$. In these coordinates, the hyperbolic metric is expressed by
\begin{align*}
g(s,t) = \varphi(s,t) \cdot ds^2 +  dt^2,
\end{align*}
where $\varphi$ is a smooth positive function satisfying $\varphi(s, 0) =1$.

Let $\delta > 0$ be small enough to have $\frac{1}{2} \leq \varphi(s,t) \leq 2$ on $N(3\delta)$ (such a $\delta$ exists because $\varphi$ is smooth).
%Un tel $\delta$ existe par continuité de le fonction $\varphi$.

We call $g_0$ the product metric which, in the Fermi coordinates $(s,t)$, is expressed by 
\begin{align*}
g_0(s,t) = ds^2 +dt^2.
\end{align*}
We then take a smooth function 
\begin{align*}
\chi : [0, 3 \delta] \longrightarrow [0,1]
\end{align*}
such that $\chi \equiv 0$ on $[0, \delta]$, $\chi \equiv 1$ on $[2 \delta, 3\delta]$ and such that $\chi$ is strictly increasing on $[\delta, 2 \delta]$.

Then we define the metric 
\begin{align*}
g_\delta(s,t) = \chi(t) g(s,t) + (1-\chi(t))g_0(s,t).
\end{align*}

This metric coincides with the hyperbolic metric on $N(3\delta)\backslash N(2\delta)$, then it can be extended all over the domain $N$ into a metric that we continue to call $g_\delta$.

Moreover, endowed with this metric, $N(\delta)$ is isometric to $[0, \delta] \times \Sigma$. We then define the metric 
\begin{align*}
g' := \frac{1}{\delta^2}g_\delta,
\end{align*}
for the cylindrical neighborhood to have length $1$.

\end{proof}

The value of $\delta$ depends only on the 28 types of curves composing $\Sigma$. That is the reason we built the domain $N$ with such regularity. Thanks to the process, we can choose $\delta$ independently of the subgraph $\Omega$ chosen.
%see Remark \ref{rem : const uniforme}. 

\begin{prop} [Lemma $34$ of \cite{CGR}] \label{prop : quasi_isom_continue}
There exists a constant $C_6 >1$,  that does not depend on the subgraph $\Omega$, such that for all $x \in N$ and all $v \in T_xN, \; v \neq 0$, we have
\begin{align*}
\frac{1}{C_6} \leq \frac{g'(x)(v,v)}{g(x)(v,v)} \leq C_6.
\end{align*}
\end{prop}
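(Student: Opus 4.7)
The statement is essentially a pointwise bi-Lipschitz estimate between $g$ and $g_\delta$ (times the scalar factor $\tfrac{1}{\delta^2}$ appearing in the definition of $g'$), so the plan is to argue region by region in the Fermi coordinates $(s,t)$ used to construct $g_\delta$, and to ensure at each step that the constants obtained depend only on the 28 types of curves composing $\Sigma$, hence only on the host graph $\Gamma$.

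First, I would split $N$ into three zones: $N(\delta)$, $N(3\delta)\setminus N(\delta)$, and $N\setminus N(3\delta)$. Outside $N(3\delta)$, the construction gives $g_\delta = g$, so $g'/g = 1/\delta^2$ identically; there is nothing to check on this zone except to record that $\delta$ is determined by the regularity of $\Sigma$ and therefore depends only on $\Gamma$. On the cylindrical collar $N(\delta)$ one has by definition $g_\delta = g_0 = ds^2+dt^2$ (so that $N(\delta)$ becomes $[0,\delta]\times\Sigma$ with the product metric), and I would compare $g_0$ with $g = \varphi(s,t)ds^2 + dt^2$ using the bound $\tfrac{1}{2}\leq\varphi\leq 2$ provided by the previous proposition.

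The main region is the transition layer $N(3\delta)\setminus N(\delta)$. For $v = v_s\partial_s + v_t\partial_t \in T_xN$, a direct computation in Fermi coordinates gives
\begin{align*}
g(v,v) &= \varphi(s,t)\, v_s^2 + v_t^2, \\
g_\delta(v,v) &= \bigl(\chi(t)\varphi(s,t) + 1-\chi(t)\bigr)\, v_s^2 + v_t^2.
\end{align*}
Since $\chi(t)\in[0,1]$ and $\tfrac{1}{2}\leq\varphi(s,t)\leq 2$ on $N(3\delta)$, the coefficient of $v_s^2$ in $g_\delta(v,v)$ lies in $[\tfrac{1}{2},2]$, and the coefficient in $g(v,v)$ lies in the same interval. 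Hence
\begin{align*}
\tfrac{1}{4}\,g(v,v) \;\leq\; g_\delta(v,v) \;\leq\; 4\,g(v,v),
\end{align*}
and the same estimate then holds on $N(\delta)$ (where $\chi\equiv 0$) and on $N\setminus N(3\delta)$ (where $\chi\equiv 1$, so $g_\delta = g$ and the ratio is $1$). Multiplying by the constant factor $1/\delta^2$, the proposition follows with, for instance, $C_6 = \max\!\bigl(4/\delta^2,\, 4\delta^2\bigr)$.

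The only point that requires care, rather than being a routine computation, is uniformity in $\Omega$: one must verify that both $\delta$ and the bound $\tfrac{1}{2}\leq\varphi\leq 2$ can be chosen independently of the subgraph. For $\varphi$ this is automatic because $g$ is the hyperbolic metric and $\varphi$ depends only on the geometry of each connected component of $\Sigma$ in a fixed tubular neighborhood; by the classification of the 28 curve types performed in Sect.~\ref{subsection : domaine lisse}, the relevant geometric data ranges over a finite set fixed by $\Gamma$. The same remark applies to $\delta$, which was chosen small enough to ensure $\tfrac{1}{2}\leq\varphi\leq 2$ on $N(3\delta)$ for every such curve type. This is the crux and the only step where one uses in an essential way that the domain $N$ was built with the controlled regularity of Sect.~\ref{subsection : domaine lisse}.
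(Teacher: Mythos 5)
Your proof is correct and follows essentially the same route as the paper: a region-by-region comparison in Fermi coordinates using $\tfrac{1}{2}\leq\varphi\leq 2$ on $N(3\delta)$, yielding a constant of order $1/\delta^2$ that depends only on the finitely many curve types and hence only on $\Gamma$. Your unified treatment via the coefficient of $v_s^2$ (observing that $\chi\varphi+1-\chi$ is a convex combination lying in $[\tfrac12,2]$) is a slightly cleaner packaging of the paper's three-case computation, and your final constant $\max(4/\delta^2,4\delta^2)$ is a valid, if marginally larger, choice than the paper's $2/\delta^2$.
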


\begin{proof}
We distinguish three cases:
\begin{itemize}
\item $x \in N\backslash N(2\delta)$;
\item $x \in N(\delta)$;
\item $x \in N(2\delta)\backslash N(\delta)$.
\end{itemize}
Let us start with the first one. Let $x \in N\backslash N(2\delta)$ and $0 \ne v \in T_xN$. We have
\begin{align*}
\frac{g'(x)(v,v)}{g(x)(v,v)} = \frac{\frac{1}{\delta^2}g_\delta(x)(v,v)}{g(x)(v,v)} = \frac{\frac{1}{\delta^2}g(x)(v,v)}{g(x)(v,v)} = \frac{1}{\delta^2}
\end{align*}
because on $N \backslash N(2\delta)$, the metric $g_\delta$ coincides with the hyperbolic metric $g$.

For the second case, let $x \in N(\delta)$ and $0 \ne v \in T_xN$.  we have 
\begin{align*}
\frac{g'(x)(v,v)}{g(x)(v,v)} & = \frac{g'(x)(v,v)}{(\varphi(s,t)ds^2 + dt^2)(v,v)} 
                              \leq \frac{g'(x)(v,v)}{(\frac{1}{2}ds^2 + \frac{1}{2}dt^2)(v,v)} \\
                             & = \frac{\frac{1}{\delta^2}g_\delta(x)(v,v)}{\frac{1}{2}(ds^2+dt^2)(v,v)} 
                              = \frac{\frac{1}{\delta^2}g_0(x)(v,v)}{\frac{1}{2}g_0(x)(v,v)} \\
                             & = \frac{2}{\delta^2}                             
\end{align*}
because $g_\delta$ coincides with the product metric $g_0$ on $N(\delta)$.

In a similar way, we have
\begin{align*}
\frac{g'(x)(v,v)}{g(x)(v,v)} & = \frac{g'(x)(v,v)}{(\varphi(s,t)ds^2 + dt^2)(v,v)} 
                              \geq \frac{g'(x)(v,v)}{(2ds^2 + 2dt^2)(v,v)} \\
                             & = \frac{\frac{1}{\delta^2}g_\delta(x)(v,v)}{2(ds^2+dt^2)(v,v)} 
                              = \frac{\frac{1}{\delta^2}g_0(x)(v,v)}{2g_0(x)(v,v)} \\
                             & = \frac{1}{2\delta^2}.                           
\end{align*}
Let us now look at the third case. Let $x \in N(2\delta)\backslash N(\delta)$ and $0 \ne v \in T_xN$. 

We recall that on $N(2\delta)\backslash N(\delta)$, the metric $g_\delta$ interpolates the product metric $g_0$ and the hyperbolic metric $g$ with the help of a smooth increasing function  $\chi$.

Then we have
\begin{align*}
\frac{g'(x)(v,v)}{g(x)(v,v)} & = \frac{\frac{1}{\delta^2}g_\delta(x)(v,v)}{g(x)(v,v)} 
                              = \frac{\frac{1}{\delta^2}(\chi(t)g(s,t) +(1-\chi(t))g_0(s,t))(v,v)}{g(x)(v,v)} \\
                             & = \frac{1}{\delta^2} \left( \chi(t) + (1-\chi(t))\frac{g_0(s,t)(v,v)}{g(x)(v,v)} \right) 
                              \geq \frac{1}{\delta^2} \left( \chi(t) + (1-\chi(t))\frac{g_0(s,t)(v,v)}{2 g_0(x)(v,v)} \right) \\
                             & = \frac{\chi(t)}{\delta^2} + \frac{1-\chi(t)}{2\delta^2} 
                              \geq \frac{1}{2 \delta^2}.
\end{align*}
Similarly, we have
\begin{align*}
\frac{g'(x)(v,v)}{g(x)(v,v)} & = \frac{\frac{1}{\delta^2}g_\delta(x)(v,v)}{g(x)(v,v)} 
                              = \frac{\frac{1}{\delta^2}(\chi(t)g(s,t) +(1-\chi(t))g_0(s,t))(v,v)}{g(x)(v,v)} \\
                             & = \frac{1}{\delta^2} \left( \chi(t) + (1-\chi(t))\frac{g_0(s,t)(v,v)}{g(x)(v,v)} \right) 
                              \leq \frac{1}{\delta^2} \left( \chi(t) + (1-\chi(t))\frac{g_0(s,t)(v,v)}{\frac{1}{2} g_0(x)(v,v)} \right) \\
                             & = \frac{\chi(t)}{\delta^2} + \frac{1-\chi(t)}{\frac{1}{2}\delta^2} \\
                             & \leq \frac{1}{\frac{1}{2} \delta^2} 
                              = \frac{2}{\delta^2}.
\end{align*}
Then the ratio is bounded for all $x \in N$ and for all $v \in T_xN, \; v \neq 0$, and we can choose 
\begin{align*}
C_6 :=   \frac{2}{ \delta^2}.
\end{align*}

%\begin{rem} \label{rem : const uniforme}
Moreover, this constant $C_6$ does not depend on the chosen subgraph $\Omega$. Indeed, the function $\varphi$ depends only on the, at most, $28$ types of curves forming $\Sigma$ %i.e. the geodesic segments, the smoothing curves and the circles, 
(which we have fixed once and for all), and $\delta$ depends only on $\varphi$. Thus, as said before, the constant $\delta >0$ can be chosen independently of the subgraph, which allows us to fix a universal value of $C_6 >1$ for all the domains $N$ obtained thanks to the procedure described in Sect. \ref{section : 3}.
%\end{rem}

\end{proof}

We now have at our disposal a new Riemannian manifold with boundary, denoted $(N, g')$, which is related to $(N, g)$ in the sense of Proposition \ref{prop : quasi_isom_continue}. We recall now Proposition $32$ of~\cite{CGR}:
\begin{prop} 
Let $N$ be a Riemannian manifold of dimension $m$, compact with smooth boundary and let $g, g'$ be two Riemannian metrics on $N$. Let us assume that there exists a constant $C_6 >1$ such that for all $x \in N$ and for all $v \in T_xN, \; v \neq 0$, we have
\begin{align*}
\frac{1}{C_6} \leq \frac{g'(x)(v,v)}{g(x)(v,v)} \leq C_6.
\end{align*}
Then we have 
\begin{align*}
\frac{1}{C_6^{2m+1}} \leq \frac{\sigma_k(N,g')}{\sigma_k(N, g)} \leq C_6^{2m+1}.
\end{align*}
\end{prop}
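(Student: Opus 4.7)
The plan is to deduce the eigenvalue comparison from the pointwise bilipschitz comparison of metrics via the min-max (variational) characterization of Steklov eigenvalues. Recall that for any compact Riemannian manifold $(N,h)$ with smooth boundary,
\[
\sigma_k(N,h) = \min_{V \in \mathcal{V}_{k+1}} \; \max_{\substack{f \in V \\ f \not\equiv 0 \text{ on } \partial N}} \frac{\int_N |df|^2_h \, dv_h}{\int_{\partial N} f^2 \, da_h},
\]
where $\mathcal{V}_{k+1}$ runs over the $(k+1)$-dimensional subspaces of $H^1(N)$ containing a function non-constant on $\partial N$. Since the trial space $V$ is an intrinsic object independent of the metric, the entire task reduces to comparing the Rayleigh quotient $R_h(f) := \int_N |df|^2_h \, dv_h \big/ \int_{\partial N} f^2 \, da_h$ for the two metrics $g$ and $g'$ on a fixed $f$.

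First I would unpack the hypothesis $C_6^{-1} g(x)(v,v) \le g'(x)(v,v) \le C_6 g(x)(v,v)$. As symmetric bilinear forms this gives $C_6^{-1} g \le g' \le C_6 g$ pointwise, so the eigenvalues of $g^{-1}g'$ lie in $[C_6^{-1}, C_6]$. I would then convert this into three comparisons of the ingredients of $R_h$. For the volume form, $dv_{g'}/dv_g = \sqrt{\det(g^{-1}g')}$ lies in $[C_6^{-m/2}, C_6^{m/2}]$. Restricting the metrics to $T(\partial N)$ and applying the same reasoning, $da_{g'}/da_g \in [C_6^{-(m-1)/2}, C_6^{(m-1)/2}]$. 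For the gradient norm, taking the inverses of the metric matrices reverses the inequalities: $C_6^{-1} g^{-1} \le g'^{-1} \le C_6 g^{-1}$, hence $|df|^2_{g'}(x) \in [C_6^{-1}, C_6] |df|^2_g(x)$ at every $x \in N$.

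Multiplying these pointwise bounds and integrating yields, for any admissible $f$,
\[
R_{g'}(f) \;\le\; C_6 \cdot C_6^{m/2} \cdot C_6^{(m-1)/2} \cdot R_g(f) \;\le\; C_6^{2m+1} R_g(f),
\]
where in the final step one uses $C_6 > 1$ to absorb the exponent into the cleaner bound $2m+1$, as stated. The reverse inequality is symmetric. Finally, I would apply min-max: for any $(k+1)$-dimensional trial space $V$, $\max_{f \in V} R_{g'}(f) \le C_6^{2m+1} \max_{f \in V} R_g(f)$, and taking infimum over $V$ gives $\sigma_k(N,g') \le C_6^{2m+1} \sigma_k(N,g)$, with the opposite bound obtained by swapping $g \leftrightarrow g'$. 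The argument has no conceptual obstacle; the only real work is the bookkeeping of the exponents — keeping track of the $m/2$ from the determinant of an $m$-form, the $(m-1)/2$ from the determinant of the induced $(m-1)$-form on $\partial N$, and the single factor from the inverse metric in the gradient norm — and then consolidating these into the uniform exponent $2m+1$.
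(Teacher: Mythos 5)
Your proof is correct. Note that the paper itself does not prove this proposition --- it is quoted verbatim as Proposition 32 of \cite{CGR} and used as a black box --- and your argument is exactly the standard variational one that the cited reference relies on: compare the three metric-dependent ingredients of the Rayleigh quotient (gradient norm, volume form, boundary area form) pointwise and feed the result through min-max over metric-independent trial subspaces. Your bookkeeping in fact yields the sharper exponent $1+\tfrac{m}{2}+\tfrac{m-1}{2}=\tfrac{2m+1}{2}$, and since $C_6>1$ this implies the stated bound with exponent $2m+1$; the discrepancy is harmless and simply reflects that the constant in \cite{CGR} is naturally expressed in terms of the length-distortion (square root of the quadratic-form ratio) rather than the ratio of quadratic forms used in the hypothesis here.
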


The assumption is exactly what we prove at Proposition \ref{prop : quasi_isom_continue}. Hence we can apply this result to $(N, g)$ and $(N, g')$ in order to get:
\begin{align} \label{ineg : metrique}
\sigma_k(N, g') \leq C_6^5 \cdot \sigma_k(N, g).
\end{align}

\subsection{Discretization of the manifold \texorpdfstring{ $(N, g')$}{n sigma g prime}} \label{section : 5}

Let us recall that we proceeded to a change of metric on  $N$ in order to give it the ability to be discretized, according to constants $r_0$ and $\kappa$, as said in Remark \ref{rem : cst discretisation}. There exist several ways to discretize a manifold. In this paper, we apply the process described in \cite{CGR}, for we want the discretization to have a spectral link with the manifold. 
\medskip

This process is the following:

We choose $\epsilon \in (0, r_0/4)$ and we choose $V_\Sigma$ a maximal $\epsilon$-separated subset of $\Sigma$. Then we call $V_\Sigma'$ the copy of $V_\Sigma$ lying $4\epsilon$ away from the boundary:
\begin{align*}
V_\Sigma' = \{4\epsilon\} \times V_\Sigma.
\end{align*}
Then we choose $V_I$ a maximal $\epsilon$-separated subset of $N \backslash [0,4\epsilon] \times \Sigma$ such that $V_\Sigma' \subset V_I$.

Then we consider the subset $\tilde{V} = V_\Sigma \cup V_I$ and grant it the structure of a graph by decreeing
\begin{itemize}
\item Two vertices $v, w \in \tilde{V}$ are adjacents as soon as $d_{g'}(v, w) \leq 3\epsilon$;
\item A vertex $v \in V_\Sigma$ is adjacent to its counterpart $v' \in V_\Sigma'$.
\end{itemize}
This process gives a graph with boundary $(\tilde{V}, \tilde{E}, V_\Sigma)$, simply denoted $(\tilde{V},V_\Sigma)$ hereafter, whose boundary is $V_\Sigma$ and  that we call  $\epsilon$-discretization of $N$.

\medskip

Theorem $3$ point $4)$ of \cite{CGR} allows us to state:
\begin{thm}
There exists a constant  $C_7 >0$ depending only on $\kappa, r_0$ and $\epsilon$ such that for all $k \leq |V_\Sigma|$, we have
\begin{align} \label{ineg : discretisation}
\sigma_k(\tilde{V}, V_\Sigma) \leq C_7 \cdot \sigma_k(N, g') \cdot k.
\end{align}
\end{thm}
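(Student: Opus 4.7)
The plan is to use the min--max characterization of the Steklov eigenvalues in both settings and to exhibit an explicit $(k{+}1)$-dimensional trial space on $V_\Sigma$ obtained by discretizing continuous Steklov eigenfunctions on $(N,g')$. Recall that
$$\sigma_k(\tilde V, V_\Sigma) = \min_{E}\max_{u \in E\setminus\{0\}} \frac{\sum_{\{v,w\}\in\tilde E}(Hu(v)-Hu(w))^2}{\sum_{v\in V_\Sigma} u(v)^2},$$
where $E$ ranges over $(k{+}1)$-dimensional subspaces of $\R^{V_\Sigma}$ and $Hu$ is the discrete harmonic extension. The entire argument consists in constructing a single good subspace $E$ and comparing the numerator and denominator of its Rayleigh quotient to the corresponding continuous quantities on $(N,g')$.

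I would take $L^2(\Sigma)$-orthonormal continuous Steklov eigenfunctions $f_0, \dots, f_k$ of $(N,g')$, attaining $\sigma_0,\dots,\sigma_k$, and define trial functions $\phi_i \in \R^{V_\Sigma}$ by averaging $f_i$ over the Voronoi cell $\mathcal{C}_v \subset \Sigma$ of each $v \in V_\Sigma$. Setting $E := \mathrm{span}(\phi_0,\dots,\phi_k)$, the key analytic step is to prove two uniform comparisons whose constants depend only on $\kappa, r_0$ and $\epsilon$: first, a boundary-norm comparison
$$ \sum_{v \in V_\Sigma} \phi(v)^2 \;\asymp\; \epsilon^{-1}\int_\Sigma f^2\,d\mu $$
by a Riemann-sum argument that uses uniform Voronoi-cell volumes (coming from the Ricci bound and injectivity radius on $\Sigma$); second, an edgewise gradient comparison
$$ \sum_{\{v,w\}\in\tilde E}(Hu(v)-Hu(w))^2 \;\lesssim\; \int_N |\nabla_{g'} \tilde f|^2\,dv_{g'}, $$
where $\tilde f$ is the harmonic extension of $f$. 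This second bound is obtained by controlling each edge difference via the integral of $|\nabla \tilde f|$ along a short path (of length $\lesssim \epsilon$ for interior edges, or of length $1$ across the cylindrical collar for the boundary edges $v \leftrightarrow v'$), then summing with Cauchy--Schwarz and a bounded-overlap lemma that follows from the injectivity radius bound $r_0$ and the Ricci bound $-\kappa$.

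Combining the two comparisons bounds the discrete Rayleigh quotient of any $\phi = \sum_{i=0}^k a_i \phi_i$ by a constant times the continuous Rayleigh quotient of $f = \sum a_i f_i$, which by orthonormality equals $\sum \sigma_i(N,g') a_i^2 / \sum a_i^2 \leq \sigma_k(N,g')$. The linear factor $k$ in the conclusion arises because, although the $f_i$ are $L^2(\Sigma)$-orthonormal, the sampled $\phi_i$ are only approximately so: controlling $\sum_v \bigl(\sum_i a_i \phi_i(v)\bigr)^2$ from below by a fixed multiple of $\sum_i a_i^2$ uniformly over the span $E$ costs a factor as large as $\dim E = k+1$, a loss characteristic of passing from an ``averaged'' to a ``worst-case'' bound over a $(k{+}1)$-dimensional trial space. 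Inserting this loss into the min--max principle yields the stated inequality with $C_7$ depending only on $\kappa, r_0, \epsilon$.

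The main obstacle, I expect, is making the two comparisons genuinely uniform across the class $\mathcal{M}(\kappa,r_0)$ of admissible manifolds --- in particular, producing a bounded-overlap lemma for the edgewise paths whose constant does not depend on $\Sigma$. This requires elliptic regularity estimates for harmonic functions that are uniform across $\mathcal{M}(\kappa,r_0)$, and it is precisely to obtain them that the cylindrical collar $[0,1]\times\Sigma$, the lower Ricci bound $-\kappa$, and the injectivity radius bound $r_0$ have been imposed in the hypotheses.
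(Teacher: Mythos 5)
First, a point of comparison: the paper does not prove this statement at all --- it is quoted directly as point 4) of Theorem 3 in \cite{CGR}, applied to $(N,g')$, which by construction lies in $\mathcal{M}(\kappa,r_0)$ and carries the cylindrical collar. Your outline does reproduce the general strategy of that cited proof (min--max over a trial space obtained by discretizing continuous Steklov eigenfunctions, plus a boundary-norm comparison and an energy comparison), so the overall shape is right. But the two ``key analytic steps'' you announce are the entire content of the theorem, and as sketched the first one fails. The lower bound $\sum_{v\in V_\Sigma}\phi(v)^2\gtrsim \epsilon^{-1}\int_\Sigma f^2$ cannot follow from a Riemann-sum argument with uniform Voronoi-cell volumes: averaging over cells of diameter $\epsilon$ annihilates any function oscillating at scale $\epsilon$ on $\Sigma$, so uniform cell volumes give only the upper bound (via Cauchy--Schwarz), never the lower one. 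To obtain the lower bound one must control the oscillation at scale $\epsilon$ of \emph{every} function in the span of $f_0,\dots,f_k$ restricted to $\Sigma$, and this is precisely where the product structure of the collar $[0,1]\times\Sigma$ and the doubled vertices $V_\Sigma'$ with their special edges are used in \cite{CGR}; your sketch never exploits either, yet without them the claimed equivalence of norms is false for general smooth $f$.

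Two further points. (i) Your energy estimate bounds the edge differences of $Hu$ by path integrals of $|\nabla_{g'}\tilde f|$, but $Hu$ is the \emph{discrete} harmonic extension of the cell averages, which is not the discretization of $\tilde f$; you must first invoke the variational property that the discrete harmonic extension minimizes the discrete energy among all extensions of $\phi$, and then estimate the energy of the discretization of $\tilde f$. (ii) Your explanation of the factor $k$ does not work quantitatively: a Gram matrix with unit diagonal and off-diagonal errors of size $\eta$ has smallest eigenvalue at least $1-(k+1)\eta$, which is either a uniform constant (if $\eta\lesssim 1/k$, not available for fixed $\epsilon$ and large $k$) or possibly negative; it does not degrade gracefully into ``a loss of a factor $k+1$'' in the denominator. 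The natural source of the linear factor is instead the numerator: the discretized eigenfunctions are no longer orthogonal for the discrete energy form, so one uses Cauchy--Schwarz for the positive semidefinite form $E$ to get $E\bigl(\sum_i a_i\phi_i\bigr)\le (k+1)\sum_i a_i^2\,E(\phi_i)\le (k+1)\,C\,\sigma_k(N,g')\sum_i a_i^2$, while the Gram matrix of the $\phi_i$ must be bounded below by a \emph{uniform} constant. As written, your proposal asserts rather than proves the theorem's two substantive estimates.
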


\subsection{Rough isometry between \texorpdfstring{$(\tilde{V}, V_\Sigma)$}{discrétisation} and \texorpdfstring{$\Omega$}{initial}} \label{section : 6}

We now want to exploit the graph $(\tilde{V}, V_\Sigma)$ for which we have an upper bound relative to its spectrum to control the spectrum of our initial subgraph $\Omega$. In order to do it, we will have to deal with the concept of rough isometry once again. This will allow us to use Proposition $16$ of \cite{CGR} to compare the Steklov spectra of the graphs. The main difficulty here is that we have to make sure the constants of the rough isometry are independant of the subgraph $\Omega$. Let us begin by defining what is a rough isometry in the context of graphs with boundary.

\begin{defn}
A rough isometry $\phi$ between two graphs with boundary $(\bar{\Omega}_1, E'_1, B_1)$ and $(\bar{\Omega}_2, E'_2, B_2)$ is a rough isometry which sends  $B_1$ onto $B_2$ and such that the restriction of $\phi$ to $B_1$ is a rough isometry $B_1 \longrightarrow B_2$ when
considering extrinsic distances on $B_1$ and $B_2$.
\end{defn}

\begin{prop} \label{prop : rough isom}
There exists a rough isometry  $\bar{\phi} : (\tilde{V}, V_\Sigma) \longrightarrow \bar{\Omega}$ whose constants $C_1, C_2, C_3$ are independent from the subgraph $\Omega$.
\end{prop}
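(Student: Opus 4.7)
The plan is to define $\bar{\phi}$ by sending each discretization vertex to a nearby vertex of $\bar{\Omega}$ in the hyperbolic sense, and then to verify the conditions of Definition \ref{def : quasi isom} (together with the boundary compatibility) by chaining the comparisons $d_{\tilde{V}} \asymp d_{g'} \asymp d_g \asymp d_{\bar{\Omega}}$ along the four intermediate spaces. The main obstacle will be the last link in this chain, between the restriction of the hyperbolic distance to $N$ and the combinatorial subgraph distance on $\bar{\Omega}$; this is precisely where the care taken in Sect.\ \ref{section : 3} to encode the adjacency structure of $\Omega$ in the geometry of $N$ pays off.

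\medskip

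For each $\tilde{v} \in V_I$, define $\bar{\phi}(\tilde{v})$ to be any vertex $w \in \bar{\Omega}$ minimizing the hyperbolic distance $d_g(\tilde{v}, w)$, breaking ties arbitrarily. For $\tilde{v} \in V_\Sigma$, distinguish two cases: if $\tilde{v}$ lies on the circular component of $\Sigma$ obtained by removing the ball around some $w \in B$, set $\bar{\phi}(\tilde{v}) = w$; otherwise $\tilde{v}$ lies on a portion of $\Sigma$ inherited from $\hat{\Sigma}$, which belongs to the boundary of some smoothed triangle $T_w'$ with $w \in B$, and I set $\bar{\phi}(\tilde{v}) = w$. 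This guarantees $\bar{\phi}(V_\Sigma) \subseteq B$, and surjectivity onto $B$ follows because the $\epsilon$-separated set $V_\Sigma$ meets every circular component (for $\epsilon$ smaller than the circumference $C_4$). Since every smoothed triangle, connecting quadrilateral, and corner polygon in $N$ is obtained by an isometry of $\H^2$ from one of finitely many congruence classes determined by $p,q,r$, their hyperbolic diameters are bounded above by a universal constant $D_0 = D_0(\Gamma)$; hence $d_g(\tilde{v}, \bar{\phi}(\tilde{v})) \leq D_0$ for every $\tilde{v} \in \tilde{V}$. Moreover, for each $w \in \bar{\Omega}$ the smoothed triangle $T_w'$ has positive $g$-diameter bounded below universally, so by maximality of the $\epsilon$-separation it contains some $\tilde{v} \in V_I$ within $g$-distance $\epsilon$ of $w$; this yields the coverage constant $C_3$.

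\medskip

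To control $d_{\bar{\Omega}}(\bar{\phi}(\tilde{v}_1), \bar{\phi}(\tilde{v}_2))$ in terms of $d_{\tilde{V}}(\tilde{v}_1, \tilde{v}_2)$, I chain three comparisons with universal constants. The first, $d_{\tilde{V}}(\tilde{v}_1, \tilde{v}_2) \asymp \epsilon^{-1} d_{g'}(\tilde{v}_1, \tilde{v}_2)$, is immediate from the $\epsilon$-separation and $3\epsilon$-adjacency rule of the discretization. The second, $d_{g'} \asymp d_g$ on $N$, is the content of Proposition \ref{prop : quasi_isom_continue} with constant $C_6$. For the crucial third comparison, any subgraph path $w_1 = v_0 \sim v_1 \sim \ldots \sim v_n = w_2$ in $\bar{\Omega}$ lifts to a continuous curve in $N$ that traverses the smoothed triangles $T_{v_0}', \ldots, T_{v_n}'$ through their shared quadrilaterals and has hyperbolic length at most $L_0 \cdot n$, giving $d_g(w_1, w_2) \leq L_0 \cdot d_{\bar{\Omega}}(w_1, w_2)$. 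Conversely, any continuous path in $N$ from $\tilde{v}_1$ to $\tilde{v}_2$ must cross a sequence of smoothed triangles whose centers form a subgraph path, and since each connecting quadrilateral and corner polygon has $g$-width bounded below by a universal $\rho_0 > 0$, the length of this subgraph path is at most $d_g(\tilde{v}_1, \tilde{v}_2) / \rho_0 + O(1)$; combined with the $D_0$-bound above, this gives the opposite inequality.

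\medskip

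The same three-step chain applies when both endpoints lie in $V_\Sigma$ and both images in $B$, with the extrinsic graph distances $d_{\tilde{V}}$ and $d_{\bar{\Omega}}$ playing the roles of $d_{g'}$ and $d_g$ through the same intermediate hyperbolic distances; hence $\bar{\phi}|_{V_\Sigma}: V_\Sigma \to B$ is itself a rough isometry. The main difficulty is the third link in the chain: establishing the uniform lower bound $\rho_0 > 0$ on the width of every connecting region requires one to exploit systematically the fact that the construction of $N$ only involves finitely many congruence classes of pieces, a property guaranteed by the regularity of the construction in Sect.\ \ref{section : 3}. Once this is settled, all the constants $\epsilon$, $C_6$, $L_0$, $\rho_0$, $D_0$ depend only on $\Gamma$, and the resulting $C_1, C_2, C_3$ for $\bar{\phi}$ are independent of the subgraph $\Omega$, as required.
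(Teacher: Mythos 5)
Your proposal is correct and follows essentially the same route as the paper: a nearest-vertex projection $\bar{\phi}$ respecting the interior/boundary dichotomy, whose distance distortion is controlled by the two facts that $N$ is assembled from finitely many isometry classes of pieces and that the adjacency of those pieces encodes the adjacency structure of $\Omega$. The paper organizes the metric comparison slightly differently --- it partitions $N$ into cobblestones $\mathcal{C}_w = T_w \cap N$ and bounds $d_{\tilde{V}}$ against $d_{\bar{\Omega}}$ directly by counting $\epsilon$-separated points per cobblestone, with the same $\max\{p,q,r\}$ correction for the corner polygons that your ``centers form a subgraph path'' step implicitly requires --- but this is bookkeeping rather than a different idea.
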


\begin{proof}
We have to define a map $\bar{\phi} : (\tilde{V}, V_\Sigma) \longrightarrow \bar{\Omega}$ and show that it is a rough isometry. 

Remark that the vertices $v$ of $\tilde{V}$ can be of different types. There are boundary vertices coming from the $28$ different kind of curves forming $\Sigma$, and  there are interior vertices coming from $N$.
%sommet du bord appartenant à un segment géodésique provenant d'un heptagone, d'un quadrilateral, appartenant à un cercle, appartenant à une courbe de lissage, sommet de l'intérieur appartenant à un heptagone dont le centre  appartient à $\Omega$, dont le centre appartient à $B$, appartenant à un quadrilateral, appartenant à un triangle. 
As a consequence, the definition of  $\bar{\phi}$ is a little bit heavy, but the idea to define the rough isometry is very natural: each vertex $v \in \tilde{V}$ is sent onto the vertex $w$ of $\bar{\Omega}$ which is of same nature (interior or boundary) and which is the nearest from it.
\medskip

Let us define 
\begin{align*}
\bar{\phi} : (\tilde{V}, V_\Sigma) \longrightarrow \bar{\Omega}.
\end{align*}
For the vertices of the boundary:
\begin{itemize}
\item For $v \in V_\Sigma$ such that $v$ is part of a side of a triangle $T'$, we choose $\bar{\phi}(v) \in B$ the vertex at the center of $T'$;
\item For $v \in V_\Sigma$ such that $v$ is part of the boundary of a ball that had been removed, we choose $\bar{\phi}(v) \in B$ the vertex at the center of the removed ball;
\item For $v \in V_\Sigma$ such that $v$ is part of a side of a quadrilateral, we find the side of a triangle closest to $v$ and we choose $\bar{\phi}(v) \in B$ as if $v$ were on this triangle's side;
\item For $v \in V_\Sigma$ such that $v$ is part of a smoothing curve, we find the side of a triangle closest to $v$ and choose $\bar{\phi}(v) \in B$ as if $v$ were on this triangle's side.
\end{itemize}

\begin{figure}[H]
\centering
\includegraphics[scale=1.3]{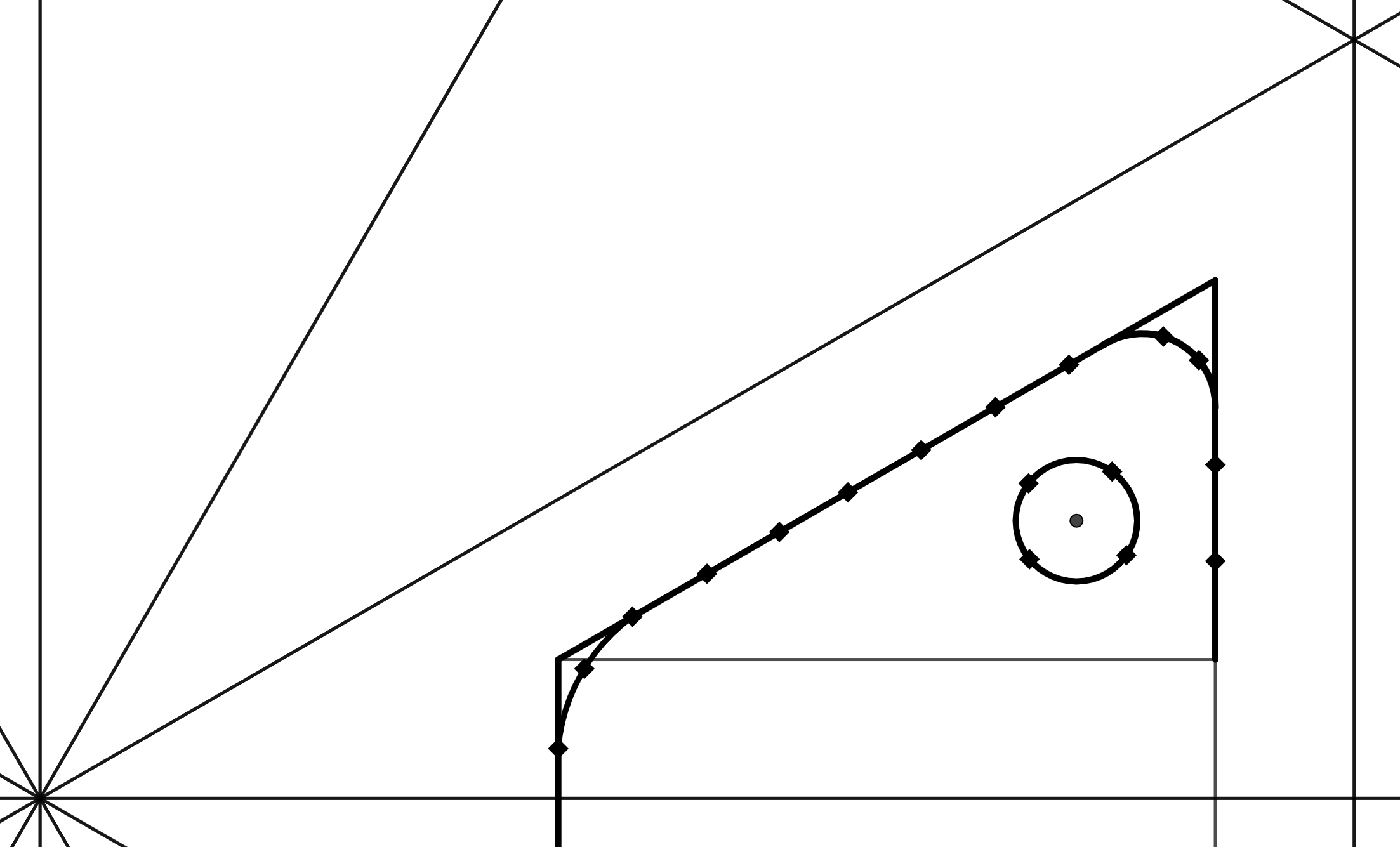}
\caption{The vertices  of $V_\Sigma$ are represented by diamonds, the dot vertex belongs to $B$. All of the diamonds are sent to the dot by $\bar{\phi}$.}
\end{figure}

And for the interior vertices:
\begin{itemize}
\item For $v \in V_I$ such that $v$ is part of a triangle whose center is $w \in \Omega$, we choose $\bar{\phi}(v) = w$;
\item For $v \in V_I$ such that $v$ is part of a triangle whose center is $w \in B$, then there exists at least one $w' \in \Omega$ such that $w \sim w'$. We then choose $\bar{\phi}(v) = w'$. If there are several possibilities, we choose one once and for all;
\item For $v \in V_I$ such that $v$ is part of a quadrilateral, then two opposite sides of this quadrilateral are the sides of two triangles $T_1', T_2'$. At least one of them has a center $w \in \Omega$. We then choose $\bar{\phi}(v) = w$.  If there are two possibilities, we choose one once and for all;
\item For $v \in V_I$ such that $v$ is part of a $2p$-gon (respectively $2q$-gon, $2r$-gon), then this $2p$-gon (resp. $2q$-gon, $2r$-gon) is surrounded by $2p$ (resp. $2q, 2r$) triangles $T_1', \ldots, T_{2p}'$ (resp. $T_{2q}', T_{2r}'$) of which at least $p$ (resp. $q, r$) have a center $w \in \Omega$. We then choose $\bar{\phi}(v) = w$ once and for all.
\end{itemize}

\begin{figure}[H]
\centering
\includegraphics[scale=1.2]{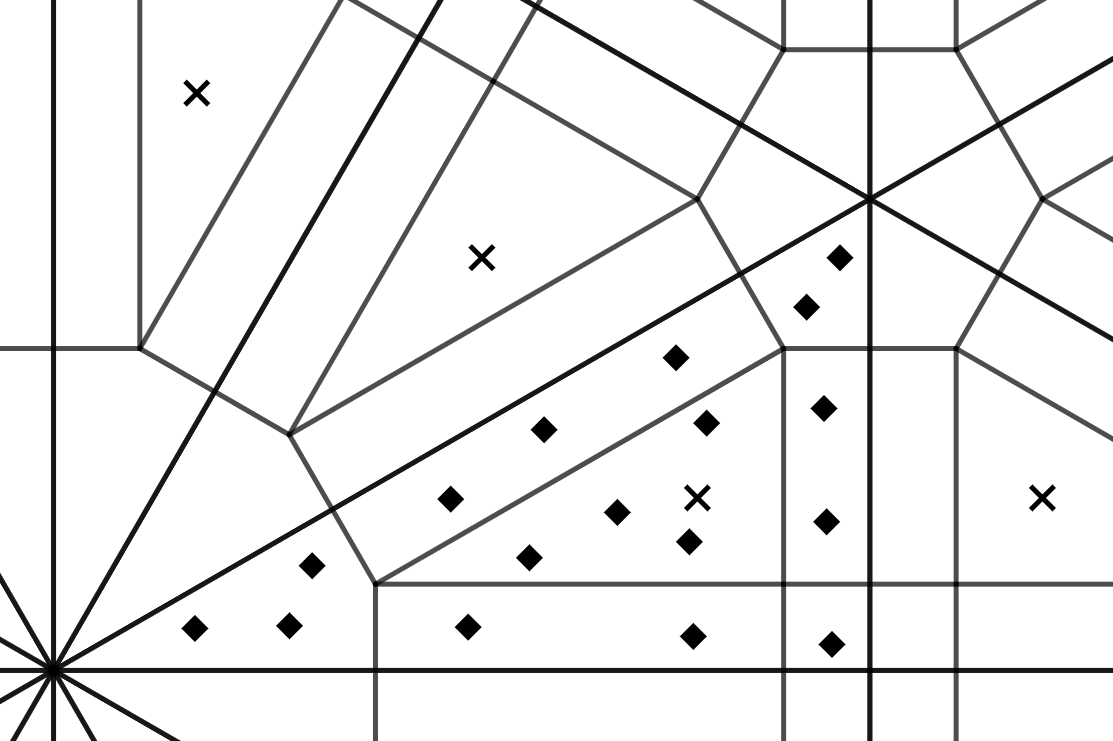}
\caption{The diamond vertices are part of $V_I$,  the cross vertices belongs to $\Omega$. All of the diamonds vertices are send to the bottom left cross vertex by $\bar{\phi}$.}
\end{figure}

In order to show that  $\bar{\phi}$ is a rough isometry, 
%choisissons $A$ comme le triple du cardinal du plus grand ensemble $\epsilon$-séparé contenu dans un heptagone initial (du pavage) $H$.  
let us partition the domain $N$ into cobblestones: a cobblestone $\mathcal{C}$ is defined as the intersection of a triangle $T$ of the initial tiling with $N$. 
%Le nombre de pavé dépend évidemment de $N$ et donc du sous-graphe $\Omega$. 
If $w \in \bar{\Omega}$ is the center of a triangle $T_w$, we denote by $\mathcal{C}_w$ the associated cobblestone. We also write $\mathcal{C}_w \sim \mathcal{C}_{w'}$ to say that two cobblestones are adjacent.
\medskip

Then we choose $C_1$ as the cardinality of the biggest possible $\epsilon$-separated set contained inside a cobblestone multiplied by $\max\{p, q, r\}$. Then we choose $C_2 = C_1$. Thus, if two vertices $v_1, v_2 \in \tilde{V}$ belongs to the same cobblestone, we have $d_{\tilde{V}}(v_1, v_2) \leq C_1$.
\medskip

We recall that by our construction of the domain $N$, for $w, w' \in \bar{\Omega}$ we have 
$$
w \sim w' \iff \mathcal{C}_w \sim \mathcal{C}_{w'},
$$
i.e the neighborhood structure of the subgraph is readable onto the domain. Therefore, for $w_1, w_2 \in \bar{\Omega}, \; w_1 \neq w_2$, the distance $d_{\bar{\Omega}}(w_1, w_2)$ represents the number of cobblestones that separate $w_1$ from $w_2$ plus one. Thus, if $v_1, v_2 \in \tilde{V}$ are such that $\bar{\phi}(v_1) = w_1$ and $\bar{\phi}(v_2)= w_2$, then we have 
\begin{align*}
C_1^{-1} d_{\tilde{V}}(v_1, v_2) -C_2 \leq d_{\bar{\Omega}}(w_1, w_2) \leq C_1 d_{\tilde{V}}(v_1, v_2) +C_2.
\end{align*}
Moreover, $\bar{\phi}$ is a surjective map so we can choose $C_3 = 1$ and we get
\begin{align*}
\bigcup_{v \in \tilde{V}} B(\bar{\phi}(v), C_3) = \bar{\Omega}.
\end{align*}

\end{proof}

We can now recall Proposition $16$ of \cite{CGR}:
\begin{prop}
Given $C_1 \geq 1, C_2, C_3 \geq 0$, there exist some constants $C_8, C_9$ depending only on $C_1, C_2, C_3$ and of the maximal degree of the vertices such that for all graphs with boundary $(\Gamma_1, B_1), (\Gamma_2, B_2)$ roughly isometric with constants $C_1, C_2, C_3$, we have
\begin{align*}
C_8 \leq \frac{\sigma_k(\Gamma_1, B_1)}{\sigma_k(\Gamma_2, B_2)} \leq C_9.
\end{align*}
\end{prop}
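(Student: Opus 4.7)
The plan is to invoke the variational (min-max) characterization of the Steklov eigenvalues,
\[
\sigma_k(\bar{\Omega}, B) = \min_{\substack{V \subset \R^{\bar{\Omega}} \\ \dim V|_B = k+1}} \ \max_{\substack{u \in V \\ u|_B \neq 0}} \frac{\sum_{\{v,w\} \in E'}(u(v)-u(w))^2}{\sum_{v \in B} u(v)^2},
\]
and to transfer an optimal test space from one graph to the other by pulling back along the rough isometry $\phi$. Since a rough isometry admits a rough inverse with constants depending only on $C_1, C_2, C_3$, the statement is essentially symmetric in $(\Gamma_1, B_1)$ and $(\Gamma_2, B_2)$, so it suffices to establish one of the two inequalities, say the upper bound on the quotient.

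For that upper bound, I would pick a $(k+1)$-dimensional subspace $W \subset \R^{\bar{\Omega}_2}$ realizing the minimum for $(\Gamma_2, B_2)$, and define $\widetilde W = \{ u \circ \phi : u \in W\} \subset \R^{\bar{\Omega}_1}$. The heart of the argument is then to bound the Rayleigh quotient of $\tilde u := u \circ \phi$ by a constant multiple (depending only on $C_1, C_2, C_3$ and the maximal degree $D$) of that of $u$. For the numerator: whenever $v \sim w$ in $\Gamma_1$ one has $d_{\Gamma_2}(\phi(v), \phi(w)) \leq C_1 + C_2$, so $\phi(v)$ and $\phi(w)$ can be joined by a path of at most $C_1 + C_2$ edges in $\Gamma_2$, and Cauchy--Schwarz bounds $(\tilde u(v) - \tilde u(w))^2$ by $(C_1 + C_2)$ times the sum of squared increments of $u$ along that path. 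Summing over edges of $\Gamma_1$, each edge of $\Gamma_2$ appears in only boundedly many such paths (the combinatorial multiplicity being controlled by $C_1, C_2, C_3$ and $D$), giving $\sum_{\{v,w\} \in E_1'}(\tilde u(v)-\tilde u(w))^2 \leq K_1 \sum_{\{x,y\} \in E_2'}(u(x)-u(y))^2$.

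For the denominator, I would use that the restriction $\phi|_{B_1}$ is itself a rough isometry $B_1 \to B_2$ (with the same constants) in the extrinsic distances. Selecting, for each $y \in B_2$, a single preimage $v \in B_1$ with $d_{\Gamma_2}(\phi(v), y) \leq C_3$ partitions $B_2$ into fibers of uniformly bounded cardinality (bounded in terms of $C_1, C_2, C_3, D$ via the exponential/polynomial volume growth of balls controlled by $D$), and a symmetric argument going through the rough inverse yields $\sum_{y \in B_2} u(y)^2 \asymp K_2 \sum_{v \in B_1} \tilde u(v)^2$, the constants depending only on $C_1, C_2, C_3, D$. Combining the two estimates shows that every $\tilde u \in \widetilde W$ has Rayleigh quotient at most $(K_1/K_2)$ times that of $u$, which yields $\sigma_k(\Gamma_1, B_1) \leq (K_1/K_2)\,\sigma_k(\Gamma_2, B_2)$ as soon as $\widetilde W$ is a legitimate test space.

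The main obstacle, and the point I expect to require the most care, is precisely showing that $\dim \widetilde W|_{B_1} = k+1$, so that $\widetilde W$ is admissible in the min-max principle: a priori the pullback $u \mapsto u \circ \phi|_{B_1}$ could drop rank if some nonzero $u \in W$ vanishes on the $C_3$-net $\phi(B_1) \subset B_2$. I would address this either by a quantitative Poincaré-type \emph{almost-constancy} estimate derived from the Rayleigh bound (which forces $u$ to vary slowly on scale $\leq C_3$ up to an error absorbed in the multiplicative constant), or more cleanly by replacing the naïve pullback with an averaged transfer built from both $\phi$ and its rough inverse $\psi$, engineered to be injective by construction. Once this rank issue is settled, the two inequalities of the proposition follow, with $C_8, C_9$ depending only on $C_1, C_2, C_3$ and $D$.
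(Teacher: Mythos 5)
First, note that the paper does not prove this proposition at all: it is imported verbatim as Proposition~16 of \cite{CGR}, so there is no in-paper argument to compare against. Your overall strategy --- min-max characterization, transplantation of an optimal test space along the rough isometry, Cauchy--Schwarz along short paths for the Dirichlet energy, bounded-multiplicity counting via bounded degree --- is the standard and correct route for results of this type, and it is in the spirit of how \cite{CGR} proves it. The numerator estimate as you describe it is fine: preimages of bounded sets under $\phi$ lie in balls of radius controlled by $C_1, C_2$, hence have cardinality controlled by the maximal degree $D$, which gives the multiplicity bound you need.

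The genuine gap is exactly where you locate it, but your two proposed fixes are not carried out and, as stated, neither one closes it. The two-sided comparison $\sum_{y \in B_2} u(y)^2 \asymp \sum_{v \in B_1} \tilde u(v)^2$ is simply false for an arbitrary $u$: a nonzero $u \in W$ may vanish on the $C_3$-net $\phi(B_1)$, which simultaneously kills the denominator and drops the rank of $\widetilde W|_{B_1}$. The correct repair is quantitative and comes in two pieces. First, for each $y \in B_2$ pick $v \in B_1$ with $d(\phi(v),y) \le C_3$ and write $u(y)^2 \le 2\,\tilde u(v)^2 + 2\,(u(y)-u(\phi(v)))^2$; summing with bounded multiplicity yields $\sum_{B_2} u^2 \le K\big(\sum_{B_1}\tilde u^2 + \mathcal{E}_2(u)\big)$, where $\mathcal{E}_2$ is the Dirichlet energy on $\Gamma_2$. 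Since every $u \in W$ satisfies $\mathcal{E}_2(u) \le \sigma_k(\Gamma_2,B_2)\sum_{B_2}u^2$, this gives $\sum_{B_2}u^2 \le K\sum_{B_1}\tilde u^2 + K\sigma_k(\Gamma_2,B_2)\sum_{B_2}u^2$. Second, one needs a dichotomy: if $K\sigma_k(\Gamma_2,B_2) \le 1/2$ the energy term is absorbed, the denominator bound and the rank condition both follow, and your argument concludes; if instead $K\sigma_k(\Gamma_2,B_2) > 1/2$, one must invoke the a priori bound $\sigma_k(\Gamma_1,B_1) \le D$ (obtained from the min-max principle with test functions supported on $B_1$, using that no two boundary vertices are adjacent) to get $\sigma_k(\Gamma_1,B_1) \le 2KD\,\sigma_k(\Gamma_2,B_2)$ directly. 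This a priori bound is an ingredient your sketch is missing, and it is one of the places where the dependence of $C_8, C_9$ on the maximal degree genuinely enters; without it, the phrase ``absorbed in the multiplicative constant'' has no content in the regime where $u$ nearly vanishes on the net.
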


Applied to this situation, we obtain 
\begin{align} \label{ineg : isom}
\sigma_k(\Omega) \leq \frac{1}{C_8} \sigma_k(\tilde{V}, V_\Sigma).
\end{align}

\subsection{Conclusion} \label{section : 7}
In this section, we prove Theorem \ref{thm : principal} and Corollary \ref{cor : zero}.
%by assembling the different results we got before.
%\medskip

%Let us prove Theorem \ref{thm : principal}.
\begin{proof}
Throughout the paper, we got different results, that we can now assemble to finally obtain \Cref{thm : principal}:
\begin{align*}
\sigma_k(\Omega)&  \stackrel{(\ref{ineg : isom})}{\leq} \frac{1}{C_8} \cdot \sigma_k(\tilde{V}, V_\Sigma) \\
                   &  \stackrel{(\ref{ineg : discretisation})}{\leq} \frac{1}{C_8} \cdot C_7 \cdot \sigma_k(N, g') \cdot k \\
                   &  \stackrel{(\ref{ineg : metrique})}{\leq} \frac{1}{C_8} \cdot C_7 \cdot C_6^5 \cdot \sigma_k(N, g) \cdot k \\
                   &  \stackrel{(\ref{ineg : domaine})}{\leq} \frac{1}{C_8} \cdot C_7 \cdot C_6^5 \cdot C_6 \cdot \frac{k}{|\Sigma|} \cdot k \\
                   & \stackrel{(\ref{ineg : bord})}{\leq} \frac{1}{C_8} \cdot C_7 \cdot C_6^5 \cdot C_6 \cdot \frac{k}{C_4 \cdot |B|} \cdot k \\
                   & =: C \cdot \frac{1}{|B|} \cdot k^2.
\end{align*}

All along the paper, we took care of specifying on which parameters the constants depend. It happens that they do not depend on the subgraph $\Omega$ chosen. They only depend on the host graph $\Gamma$ and on $\epsilon$. Therefore, if we set a value for $\epsilon$, we can take the same constant $C$ for all subgraph $\Omega$ of $\Gamma$; it is now fixed once and for all.
\medskip

As a consequence, for a choice of three integers $p, q, r \geq 2$ such that $\frac{1}{p} + \frac{1}{q}+ \frac{1}{r} < 1$, giving birth to a tessellation of the hyperbolic plane and to a host graph $\Gamma$ as defined in Sect. \ref{section : 2}, there exists a constant $C = C(\Gamma)$ such that for any subgraph $\Omega$ of $\Gamma$, we have
\begin{align*}
\sigma_k(\Omega) \leq C(\Gamma) \cdot \frac{1}{|B|} \cdot k^2.
\end{align*}

\end{proof}

From this statement, let us prove Corollary \ref{cor : zero}.
\begin{proof}
It is enough to notice the following fact: for $(\Omega_l, B_l)_{l \geq 1}$ a family of subgraphs of $\Gamma$ such that $|\Omega_l| \longrightarrow \infty$, then we also have $|B_l| \longrightarrow \infty$. 
\medskip

Therefore, for all  $k \in \N$ fixed, we have
\begin{align*}
\sigma_k(\Omega_l, B_l) \leq C(\Gamma) \cdot \frac{1}{|B_l|} \cdot k^2 \underset{l \to \infty}{ \longrightarrow} 0.
\end{align*}

\end{proof}

\section{Consideration and interrogation} \label{sect : interrogation}

All the constructions above were about a host graph $\Gamma$, which was a triangle-tiling graph. However, one may have noticed that we could have used other polygons rather than triangles and still obtained the result. The information we used is the finite number of possible situations, like the $28$ different kinds of curves composing $\Sigma$ or the $18$ types of angles to smooth out.

All these constructions could have emerged from any exact tessellation of the hyperbolic plane, as long as the tiles are compact and the number of different polygon in the tessellation is finite (the tessellation is exact if and only if each edge of a tile is an edge of exactly two polygons of the tessellation). If we used other polygons rather than triangles, the number of different possible situations would have been larger, and the constants would have been different. Nevertheless, the result would have been the same.
\medskip

This comment shows that the result we get in this paper is more general than it primarily seems. Unfortunately, it has its limits. If we get interested in a tiling of the hyperbolic plane which has infinitely many kinds of tiles, then our construction is not relevant anymore. In the same way, if a tile of the tessellation is not compact, we cannot use our method either.
\medskip

This consideration leads to an open question:
\begin{question} \label{quest : autre graphe}
If $\Gamma$ is any graph roughly isometric to the hyperbolic plane, is there a constant $C = C(\Gamma)$ such that a bound as in Theorem $\ref{thm : principal}$ exists?
\end{question}

This question naturally leads to a more general interrogation. In order to properly define the problem, let us give a definition.

\begin{defn}
We say that a host graph $\Gamma$ has the property (P) if for each $k \in \N$ and each family $(\Omega_l)_{l \ge 1}$ of subgraphs of $\Gamma$, we have 
\begin{align*}
     |\Omega_l| \underset{l \to \infty}{\longrightarrow} \infty \implies \sigma_k(\Omega_l) \underset{l \to \infty}{\longrightarrow} 0.
\end{align*}
\end{defn}

Now we can ask the following open question:
\begin{question} \label{question : comportement}
Let $\Gamma_1, \Gamma_2$ be two roughly isometric graphs. Let us assume that $\Gamma_1$ has the property (P). Does $\Gamma_2$ also have the property (P)? 
\end{question}

Reformulated in the language of geometric group theory, the question becomes 
\begin{center}
\textit{Is the property (P) a large scale invariant?}
\end{center}

This question, apparently not so hard, appears to be more thorny  than expected. 

If positively answered, it would automatically generalise our result to any graph roughly isometric to the hyperbolic plane, and it would certainly have many other applications.
\medskip

Another interesting interrogation one may have consists in wondering if some similar constructions could be done in the hyperbolic space $\H^n$, with $n \ge3$. In particular, a first question is the following:
\begin{center}
    \textit{Is there a natural class of graphs, analogous to triangle-tiling graphs, that would be roughly
isometric to $\H^n$?}
\end{center}
The answer to this question is \textit{yes.} Using \parencite[Sect. 6.8]{Rat}, we can generate  tessellations of $\H^n$ with polyhedra, for any $n \ge 2$. From such a tessellation, we can define a host graph $\Gamma$ in the same manner as we did in this paper. It could be interesting to study such a host graph and see if some results analogous to \Cref{thm : principal} hold in higher dimension. This consideration leads to the following open question:
\begin{question}
Let $\Gamma$ be a graph coming from a  polyhedral tessellation of $\H^n, n \ge 3$. Does a constant $C=C(\Gamma)$ exist, such that a bound as in \Cref{thm : principal} holds?
\end{question}

\begin{appendices}

\section{About the importance of the small triangles in our construction } \label{appendix : contre expl}

We provide here an example which shows that, given a subgraph $\Omega$ of $\Gamma$, we cannot simply consider the domain that we get when thickening the union of $T_w$ for all $w\in \bar{\Omega}$. 
\medskip

Let us consider the subgraph given by the following figure:
\begin{figure}[H]
    \centering
    \includegraphics[scale=0.5]{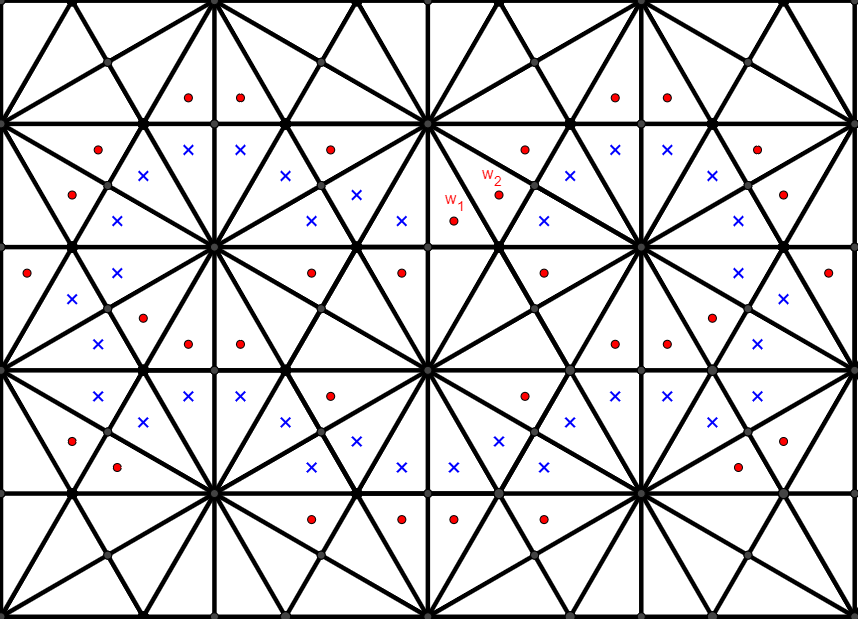}
    \caption{The crosses vertices form the interior of the subgraph, the dot vertices form the boundary.}
    \label{fig : graphe contre expl}
\end{figure}
We are particularly interested in the boundary vertices named $w_1$ and $w_2$ in Fig. \ref{fig : graphe contre expl}. Here are two properties that $w_1$ and $w_2$ have:
\begin{itemize}
    \item $w_1$ is close to $w_2$ in the host graph. Indeed, they belong to two adjacent triangles of the tessellation. Therefore, $d_\Gamma(w_1, w_2)=1$ (where we used the notation $d_\Gamma$ for the distance in the host graph).
    \item $w_1$ is far from $w_2$ in the subgraph. Indeed, by definition there is no edge between $w_1$ and $w_2$ in the subgraph. In fact, we have $d_\Omega(w_1, w_2)=33$, which is the diameter of the subgraph (we used the notation $d_\Omega$ for the distance in the subgraph).
\end{itemize}
Because we are building a domain which is a sort of analog of the subgraph, we have to make sure that the distance between $w_1$ and $w_2$ is large in the domain.

The domain $\hat{N}$ that we get from this subgraph, using the strategy  presented in this paper (using the small triangles), is the following:
\begin{figure}[H]
    \centering
    \includegraphics[scale=0.4]{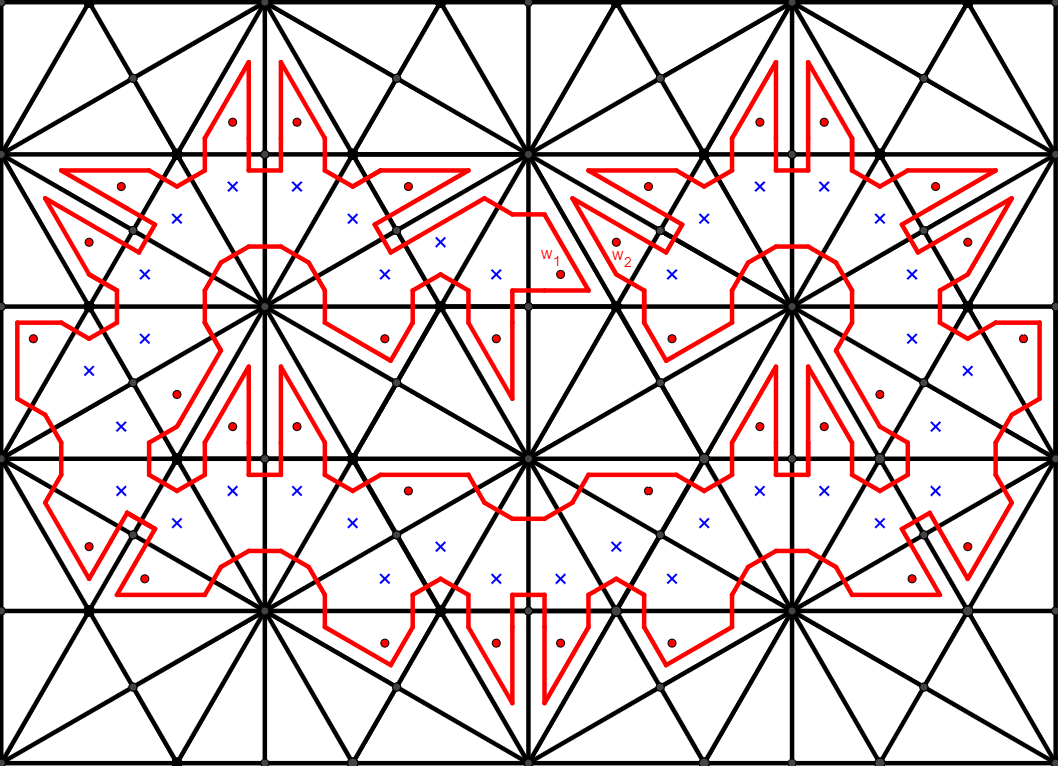}
    \caption{Using now $d_{\hat{N}}$ as a notation for the distance in $\hat{N}$, we can easily see that $d_{\hat{N}}(w_1, w_2)$ is large, roughly as the diameter of $\hat{N}$.}
    \label{fig: domaine contre expl correct}
\end{figure}

Here is now the domain that we get while considering the union of triangle $T_w$ for all $w\in \bar{\Omega}$:
\begin{figure}[H]
    \centering
    \includegraphics[scale=0.46]{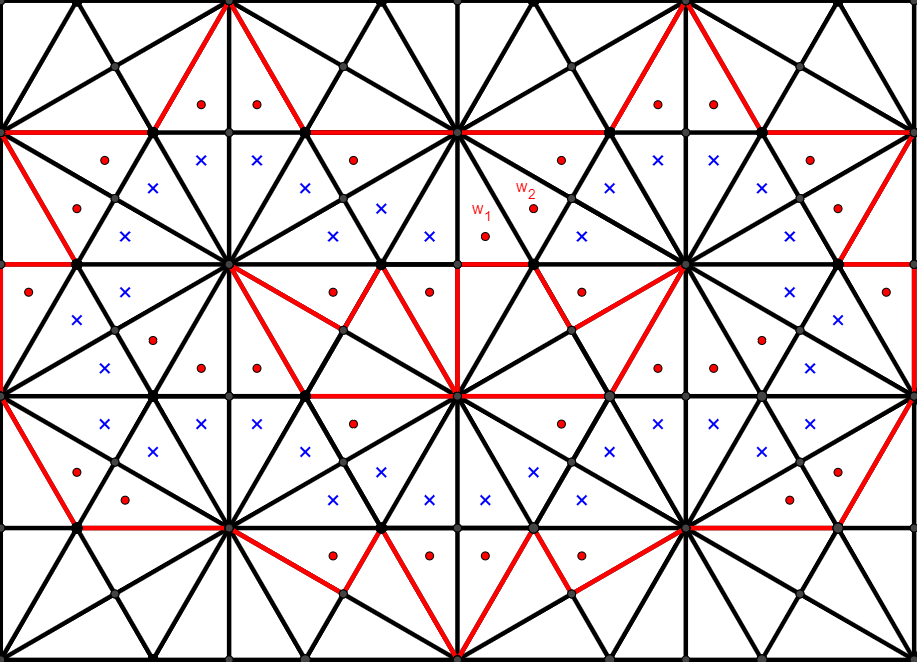}
    \caption{We can see that the distance in the domain between $w_1$ and $w_2$ is small. }
    \label{fig: domaine contre expl incorrect}
\end{figure}

If we were to pursue our construction with the domain given by Fig. \ref{fig: domaine contre expl incorrect}, we would have a real problem when building the rough isometry of Proposition \ref{prop : rough isom}. 
\medskip

Indeed, let us now consider a family of subgraphs $(\Omega_l)_{l \ge 1}$, such that $|\Omega| \underset{l\to \infty}{ \longrightarrow} \infty$ and such that each subgraph of the family has the same particular property as the subgraph of Fig. \ref{fig : graphe contre expl} (the property concerning $w_1$ and $w_2$ we discussed above). In that case, the constants in the rough isometry would then have to be chosen according to each subgraph (the diameter of each subgraph would do). This would obviously destroy our result.

\end{appendices}

%\addcontentsline{toc}{chapter}{Bibliography}
%\listoffigures
%\nocite{*} %Permet d'afficher toute la bibliographie sans forcément avoir fait référence dans le document
%\bibliographystyle{abbrv}
%\bibliography{biblio_article_poly}
\printbibliography

%The global pictures of Poincaré's Disk $\H^2$ comes from \cite{Wiki_pavage_triangulaire}.
\medskip

%Université de Neuchâtel, Institut de Mathématiques, Rue Emile-Argand 11, CH-2000 Neuchâtel, Switzerland

%E-mail address : leonard.tschanz@unine.ch

\end{document}